\theoremstyle{plain}
\newtheorem{theorem}[subsection]{Theorem}
\newtheorem{lemma}[subsection]{Lemma}
\newtheorem{corollary}[subsection]{Corollary}
\theoremstyle{definition}
\newcommand{\Aut}{\mathrm{Aut}}
\newcommand{\Bun}{\mathrm{Bun}}
\newcommand{\et}{{\mathrm{\acute{e}t}}}
\newcommand{\Gm}{{\mathbb{G}_m}}
\newcommand{\Hom}{\mathrm{Hom}}
\newcommand{\Ker}{\textrm{Ker}}
\renewcommand{\P}{\mathbb{P}}
\newcommand{\Rep}{\mathrm{Rep}}
\newcommand{\Spec}{\mathrm{Spec}}
\newcommand{\Z}{\mathbb{Z}}
\begin{document}

\title{A Tannakian classification of torsors on the projective line}
\author{Johannes Ansch\"{u}tz}
\email{ja@math.uni-bonn.de}
\date{\today}

\begin{abstract}
In this small note we present a Tannakian proof of the theorem of Grothendieck-Harder on the classification of torsors under a reductive group on the projective line over a field.
\end{abstract}

\maketitle

\section{Introduction}

Let $k$ be a field, let $G/k$ be a reductive group and let $\P^1_k$ be the projective line over $k$.
In this small note we present a Tannakian proof of the classification of $G$-torsors on $\P^1_k$, thereby reproving known results of A.\ Grothendieck \cite{grothendieck_sur_la_classification_des_fibres} and G.\ Harder \cite[Satz 3.4.]{harder_halbeinfache_gruppenschemata_ueber_vollstaendigen_kurven}. 
To state our main theorem we denote by 
$$
\mathrm{Hom}^\otimes(\Rep_k(G),\Rep_k(\Gm))
$$
the set of isomorphism classes of exact tensor functors
$$
\omega\colon \Rep_k(G)\to \Rep_k(\Gm).
$$

\begin{theorem}[cf.\ \Cref{theorem: main theorem}, \Cref{corollary: zariski torsors}]
\label{theorem: main theorem introduction}
There exists a canonical bijection
$$
\mathrm{Hom}^\otimes(\Rep_k(G),\Rep_k(\Gm))\cong H^1_\et(\P^1_k,G).
$$
In particular, there exists a canonical bijection
$$
\Hom(\Gm,G)/{G(k)}\cong H^1_{\mathrm{Zar}}(\P^1_k,G).
$$
\end{theorem}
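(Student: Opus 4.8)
The plan is to run the Tannakian description of torsors in two directions and match them up. On the one hand, the version identifying, for the affine group $G/k$ and a $k$-scheme $X$, the groupoid of $G$-torsors on $X$ with that of exact $k$-linear tensor functors $\Rep_k(G)\to\mathrm{Vect}(X)$ yields a canonical bijection $H^1_\et(\P^1_k,G)\cong\mathrm{Hom}^\otimes(\Rep_k(G),\mathrm{Vect}(\P^1_k))$. On the other hand, $\Rep_k(\Gm)$ is the category of finite-dimensional $\Z$-graded $k$-vector spaces, and the assignment
$$
\Phi\colon\ \textstyle\bigoplus_n V_n\ \longmapsto\ \bigoplus_n V_n\otimes_k\mathcal{O}_{\P^1}(n)
$$
is an exact tensor functor $\Rep_k(\Gm)\to\mathrm{Vect}(\P^1_k)$, namely pullback along the map $\P^1_k\to B\Gm$ classifying $\mathcal{O}(1)$. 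Precomposition with $\Phi$ gives a map $\omega\mapsto\Phi\circ\omega$ from $\mathrm{Hom}^\otimes(\Rep_k(G),\Rep_k(\Gm))$ to $\mathrm{Hom}^\otimes(\Rep_k(G),\mathrm{Vect}(\P^1_k))$, and the whole theorem reduces to showing this map is a bijection.

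To build a candidate inverse I would use the Harder--Narasimhan formalism, which on $\P^1$ is especially rigid: every vector bundle is a direct sum of line bundles, so a semistable bundle of (necessarily integral) slope $n$ is isomorphic to $\mathcal{O}(n)^{\oplus m}$, and the HN filtration is canonical, Galois-stable, hence defined over $k$. Since slopes are additive and the associated graded of a tensor product of isoclinic bundles is again isoclinic, the functor $E\mapsto\mathrm{gr}^{\mathrm{HN}}(E)$ is an exact tensor functor from $\mathrm{Vect}(\P^1_k)$ to $\Z$-graded bundles, and $E\mapsto\bigoplus_n\Hom_{\P^1}(\mathcal{O}(n),\mathrm{gr}^{\mathrm{HN}}_n E)$ is an exact tensor functor landing in $\Rep_k(\Gm)$. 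Composing with $\eta\in\mathrm{Hom}^\otimes(\Rep_k(G),\mathrm{Vect}(\P^1_k))$ defines the proposed inverse $\Psi(\eta)\colon V\mapsto\bigoplus_n\Hom_{\P^1}(\mathcal{O}(n),\mathrm{gr}^{\mathrm{HN}}_n\eta(V))$.

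One composite is then immediate: for a graded fiber functor $\omega$ the bundle $\Phi(\omega(V))=\bigoplus_n\omega(V)_n\otimes\mathcal{O}(n)$ has $\mathrm{gr}^{\mathrm{HN}}_n$ equal to $\omega(V)_n\otimes\mathcal{O}(n)$, so $\Psi(\Phi\circ\omega)\cong\omega$ tautologically. The substance of the theorem lies in the other composite, namely in proving $\eta\cong\Phi\circ\Psi(\eta)$ --- equivalently, that the canonical HN filtration of the $G$-torsor underlying $\eta$ splits in a tensor-compatible, i.e.\ $G$-equivariant, fashion; this I expect to be the main obstacle. Tannakially, $\mathrm{gr}^{\mathrm{HN}}\circ\eta$ is a $\Z$-filtered fiber functor, so by the Tannakian interpretation of filtrations it is induced by a cocharacter $\lambda\colon\Gm\to G$, reducing $\eta$ to the parabolic $P=P(\lambda)$ with Levi $L=Z_G(\lambda)$; the task is to promote this to a reduction to $L$, i.e.\ to split $P\twoheadrightarrow L$ over $\P^1_k$.

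This splitting is governed by the cohomology of the associated bundle of $\mathrm{Lie}(U)$, where $U$ is the unipotent radical of $P$. By the defining property of the HN reduction, every root space occurring in $\mathrm{Lie}(U)$ contributes a bundle of strictly positive slope, so the associated unipotent Lie algebra bundle is a sum of line bundles $\mathcal{O}(d)$ with $d>0$; since $H^1(\P^1_k,\mathcal{O}(d))=0$ for all $d\geq -1$, both the obstruction to and the indeterminacy of the reduction to $L$ vanish, filtering $U$ through its lower central series. This produces a unique $L$-reduction whose associated graded recovers $\eta$, giving $\eta\cong\Phi\circ\Psi(\eta)$ and hence the bijection. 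Finally, the ``in particular'' statement is the special case of trivial underlying torsor over $\Spec k$, which corresponds to Zariski-local triviality on $\P^1_k$: a graded fiber functor with trivial underlying fiber functor is exactly a cocharacter $\Gm\to G$, and isomorphisms between such are precisely $G(k)$-conjugacies, yielding $\Hom(\Gm,G)/G(k)\cong H^1_{\mathrm{Zar}}(\P^1_k,G)$.
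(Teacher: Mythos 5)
Your overall architecture --- the Tannakian description of torsors, passage to the Harder--Narasimhan graded, and splitting of the resulting filtration via cohomological vanishing for the positive-slope unipotent pieces --- is the same as the paper's, but there is a genuine gap at the foundational step: you assert that $E\mapsto \mathrm{gr}^{\mathrm{HN}}(E)$ is an \emph{exact} tensor functor on $\mathrm{Vect}(\P^1_k)$. This is false, and the paper explicitly warns against it: applying $\mathrm{gr}^{\mathrm{HN}}$ to the Euler sequence
$$
0\to \mathcal{O}_{\P^1_k}(-1)\to \mathcal{O}_{\P^1_k}^{\oplus 2}\to \mathcal{O}_{\P^1_k}(1)\to 0
$$
kills both maps (source and target sit in different HN-degrees), so the graded sequence has zero differentials and is not exact. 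Everything downstream in your argument needs exactness of $\mathrm{HN}\circ\eta$: it is required for $\Psi(\eta)$ to define an object of $\Hom^\otimes(\Rep_k(G),\Rep_k(\Gm))$ at all, and the Tannakian theory of filtrations that underwrites your step ``$\mathrm{HN}\circ\eta$ is induced by a cocharacter $\lambda$, giving a reduction to $P(\lambda)$'' applies only to \emph{exact} tensor functors into $\mathrm{FilBun}_{\P^1_k}$ (Ziegler's filtered fiber functors). This is precisely where reductivity of $G$ must enter, and it is the paper's key technical lemma (\Cref{lemma: composition with harder-narasimhan filtration still exact}): exactness of $\mathrm{HN}\circ\omega$ is proved by reducing injectivity to one-dimensional subrepresentations via exterior powers, invoking Haboush's theorem to produce a $G$-invariant homogeneous polynomial splitting $\mathrm{TS}_r(V)\to\mathrm{TS}_r(V^\prime)$, and concluding with a torsion-freeness and rank argument. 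Your proposal never supplies, or even flags, this step; in positive characteristic it is not formal, and your phrase ``since slopes are additive\ldots the functor is exact'' papers over exactly the point where the proof has content.

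Granting that lemma, the rest of your argument is essentially the paper's in different clothing: your reduction to $P(\lambda)$ and splitting of $P\twoheadrightarrow L$ through the lower central series of $U$, using that the graded pieces of $\mathrm{Lie}(U)$ have slopes $\geq 1$ and $H^1_\et(\P^1_k,\mathcal{O}_{\P^1_k}(i)^{\oplus n})=0$ for $i\geq 1$, is Ziegler's torsor of splittings $\mathrm{Spl}(\tilde{\omega})$ under $U(\tilde{\omega})$ with $\mathrm{gr}^iU(\tilde{\omega})\cong\mathrm{gr}^i\tilde{\omega}(\mathrm{Lie}(G))$, exactly as in the proof of \Cref{theorem: main theorem} --- though note that even producing a single cocharacter $\lambda$ and a globally defined parabolic reduction over $\P^1_k$, rather than over a point, already presupposes that formalism, so it cannot be used to bypass the exactness issue. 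Your computation of the composite $\Psi\circ\Phi$ matches the paper's injectivity argument via \Cref{lemma: representations of gm as graded vector bundles}, and your sketch of the ``in particular'' statement is the content of \Cref{lemma: fibers are zariski torsors} (there with the extra care of identifying the fiber of $\Phi$ over an arbitrary fiber functor with $[\Hom(\Gm,H)/H(k)]$ for the pure inner form $H$). So the one missing idea is the Haboush-based exactness lemma; with it inserted, your proof closes up and coincides with the paper's.
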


If $A\subseteq G$ denotes a maximal split torus, then
$$
\Hom(\Gm,G)/{G(k)}\cong X_\ast(A)_+
$$
is in bijection with the set of dominant cocharacters of $A\subseteq G$, which gives a very concrete description of the set $H^1_{\mathrm{Zar}}(\P^1_k,G)$. Using pure inner forms of $G$ over $k$ one can describe similarly the whole set $H^1_\et(\P^1_k,G)$ (cf.\ \Cref{lemma: fibers are zariski torsors}). 

Our proof of \Cref{theorem: main theorem introduction}, which originated in questions about torsors over the Fargues-Fontaine curve (cf.\ \cite{anschuetz_reductive_group_schemes_over_the_fargues_fontaine_curve}), is based on the Tannakian description of $G$-torsors (cf.\ \Cref{lemma: tannakian description of torsors}), the Tannakian theory of filtered fiber functors (cf.\ \cite{ziegler_graded_and_filtered_fiber_functors}), the canonicity of the Harder-Narasimhan filtration (cf.\ \Cref{lemma: harder-narasimhan filtration tensor functor}) and, most importantly, the good understanding of the category $\Bun_{\P^1_k}$ of vector bundles on $\P^1_k$ (cf.\ \Cref{theorem: classification of vector bundles}). In particular, we use crucially the fact that
$$
H^1_\et(\P^1_k,\mathcal{E})=0
$$ 
for $\mathcal{E}$ a semistable vector bundle on $\P^1_k$ of slope $\geq 0$.

In a last section we mention applications of \Cref{theorem: main theorem introduction} to the the computation of the Brauer group of $\P^1_k$ (avoiding Tsen's theorem) and to the Birkhoff-Grothendieck decomposition of $G(k((t)))$.

\subsection{Acknowledgment}
We want to thank Jochen Heinloth for his interest and for answering several questions.

\section{Vector bundles on $\P^1_k$}

Let $k$ be an arbitrary field. We recall, in a more canonical form, the classification of vector bundles on the projective line $\P^1_k$ due to A.\ Grothendieck (cf.\ \cite{grothendieck_sur_la_classification_des_fibres}).
Let 
$$
\Rep_k(\Gm)
$$ 
be the category of finite dimensional representations of the multiplicative group $\Gm$ over $k$. More concretely, the category $\Rep_k(\Gm)$ is equivalent to the Tannakian category of finite dimensional $\Z$-graded vector spaces over $k$.
 
Over $\P^1_k$ there is the canonical $\Gm$-torsor
$$
\eta\colon \mathbb{A}^2_k\setminus\{0\}\to \P^1_k,\ (x_0,x_1)\mapsto [x_0:x_1],
$$
also called the ``Hopf bundle''.
Given a representation $V\in \Rep_k(\Gm)$ the contracted product
$$
\mathcal{E}(V):=\mathbb{A}^2_k\setminus\{0\}\times^\Gm V\to \P^1_k
$$
defines a (geometric) vector bundle over $\P^1_k$. The well known classification of the category 
$$
\Bun_{\P^1_k}
$$ 
of vector bundles on $\P^1_k$ can now be phrased in the following way.

\begin{theorem}
\label{theorem: classification of vector bundles}
The functor
$$
\mathcal{E}(-)\colon \Rep_k(\Gm)\to \Bun_{\P^1_k}
$$
is an exact, faithful tensor functor inducing a bijection on isomorphism classes.  
\end{theorem}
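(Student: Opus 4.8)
The plan is to verify the formal tensor-categorical properties first and then to establish the bijection on isomorphism classes, whose surjectivity is the classical splitting theorem of Grothendieck. That the associated-bundle functor $\mathcal{E}(-)$ is a tensor functor is essentially formal: the contracted product satisfies $\mathcal{E}(V\otimes W)\cong\mathcal{E}(V)\otimes\mathcal{E}(W)$ naturally and sends the trivial representation $k$ to $\mathcal{O}_{\P^1_k}$, so it is symmetric monoidal. For exactness and faithfulness I would pull back along the Hopf bundle $\eta\colon\mathbb{A}^2_k\setminus\{0\}\to\P^1_k$: since $\eta$ is a $\Gm$-torsor, hence faithfully flat, the pullback $\eta^\ast\mathcal{E}(V)$ is the equivariant bundle $\mathcal{O}_{\mathbb{A}^2_k\setminus\{0\}}\otimes_k V$, and the functor $V\mapsto\mathcal{O}_{\mathbb{A}^2_k\setminus\{0\}}\otimes_k V$ is exact and faithful because $-\otimes_k V$ is and because faithfully flat descent reflects exactness and the nonvanishing of morphisms.

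Next I would pin down which line bundles arise. For the one-dimensional representation $\chi_n$ of weight $n$, a section of $\mathcal{E}(\chi_n)$ over an open $U$ is a $\Gm$-equivariant function $\eta^{-1}(U)\to k$, i.e.\ a function homogeneous of degree $n$ in the coordinates $x_0,x_1$; normalizing so that the weight-one character corresponds to $\mathcal{O}(1)$, one finds $\mathcal{E}(\chi_n)\cong\mathcal{O}(n)$. Hence every line bundle lies in the image, and under the identification of $\Rep_k(\Gm)$ with $\Z$-graded vector spaces the functor sends $\bigoplus_n V_n$ to $\bigoplus_n\mathcal{O}(n)^{\oplus\dim_k V_n}$.

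For surjectivity on isomorphism classes I would prove, by induction on the rank, that every vector bundle $\mathcal{E}$ on $\P^1_k$ is a direct sum of line bundles. The inductive step rests on the cohomology of $\P^1_k$, namely $H^1(\P^1_k,\mathcal{O}(m))=0$ for $m\geq -1$. After twisting I may assume the maximal degree of a line subbundle of $\mathcal{E}$ is $0$; I choose a line subbundle $\mathcal{O}\hookrightarrow\mathcal{E}$ of maximal degree, which is automatically saturated, so the quotient $\mathcal{F}$ is locally free of smaller rank and $\mathcal{F}\cong\bigoplus_i\mathcal{O}(b_i)$ by induction. The maximality of $0$ forces $H^0(\P^1_k,\mathcal{E}(-1))=0$, and the long exact sequence of $0\to\mathcal{O}(-1)\to\mathcal{E}(-1)\to\mathcal{F}(-1)\to 0$ then yields $H^0(\P^1_k,\mathcal{F}(-1))=0$, whence $b_i\leq 0$ for all $i$. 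Consequently $\mathrm{Ext}^1(\mathcal{F},\mathcal{O})\cong\bigoplus_i H^1(\P^1_k,\mathcal{O}(-b_i))=0$, so the defining extension of $\mathcal{E}$ splits and $\mathcal{E}\cong\mathcal{O}\oplus\mathcal{F}$ is a sum of line bundles. This splitting step is the main obstacle, being the only genuinely nonformal input.

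Finally, injectivity on isomorphism classes amounts to the uniqueness of the multiset $\{b_i\}$. Since $\P^1_k$ is projective, $\Bun_{\P^1_k}$ has finite-dimensional $\Hom$-spaces and satisfies the Krull--Schmidt property; the line bundles $\mathcal{O}(n)$ are indecomposable with $\mathrm{End}(\mathcal{O}(n))=k$ and are pairwise non-isomorphic, as they have distinct degrees. Hence the decomposition into line bundles is unique up to reordering, the multiplicities $\dim_k V_n$ are recovered from $\mathcal{E}(V)$, and $\mathcal{E}(-)$ induces a bijection on isomorphism classes.
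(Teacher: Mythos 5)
Your proof is correct, but note that the paper itself offers no proof of this theorem at all: it is quoted as Grothendieck's classical result (citing \cite{grothendieck_sur_la_classification_des_fibres}), so you have supplied the standard argument that the paper delegates to the literature. Your route --- formal tensor-functoriality of the contracted product, exactness and faithfulness by faithfully flat pullback along the Hopf bundle, the splitting theorem by induction on rank via $H^1(\P^1_k,\mathcal{O}(m))=0$ for $m\geq -1$, and uniqueness via Krull--Schmidt --- is exactly the classical proof and works over an arbitrary field, which is the generality the paper needs. Two small points deserve attention. First, when you ``assume the maximal degree of a line subbundle of $\mathcal{E}$ is $0$'' you are implicitly using that the degrees of line subbundles are bounded above; this needs a one-line justification, e.g.\ dualizing a surjection $\bigoplus\mathcal{O}(-N)\twoheadrightarrow\mathcal{E}^\vee$ gives an embedding $\mathcal{E}\hookrightarrow\bigoplus\mathcal{O}(N)$, and $\Hom(\mathcal{O}(d),\mathcal{O}(N))=0$ for $d>N$. (Here $\mathrm{Pic}(\P^1_k)=\Z$ over any field, so line subbundles are indeed of the form $\mathcal{O}(d)$.) Second, your ``normalization'' sending the weight-one character to $\mathcal{O}(1)$ is not actually available: the functor $\mathcal{E}(-)$ is fixed by the contracted product with the Hopf bundle, and as the paper's footnote records, this sends the standard representation $z\mapsto z$ to $\mathcal{O}_{\P^1_k}(-1)$, not $\mathcal{O}_{\P^1_k}(1)$. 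This sign is immaterial for the present statement (composing with $n\mapsto -n$ is a bijection on isomorphism classes), but it matters for the paper's later description of the Harder--Narasimhan filtration of $\mathcal{E}(V)$ in terms of $\mathrm{fil}^i(V)=\bigoplus_{j\geq i}V_j$, so the convention should be stated consistently.
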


However, the functor $\mathcal{E}(-)$ is not an equivalence. For example, by semi-simplicity of the category $\Rep_k(\Gm)$ every short exact sequence of $\Gm$-representations splits, but this is not true for short exact sequences of vector bundles on $\P^1_k$.

For $V\in \Rep_k(\Gm)$ the Harder-Narasimhan filtration of the vector bundle
$$
\mathcal{E}(V)
$$
has a very simple description. Namely, write 
$$
V=\bigoplus\limits_{i\in \Z} V_i
$$
with $\Gm$ acting on $V_i$ by the character\footnote{The sign is explained by the fact that the standard represention $z\mapsto z$ of $\Gm$ is sent by $\mathcal{E}(-)$ to $\mathcal{O}_{\P^1_k}(-1)$ and not to $\mathcal{O}_{\P^1_k}(1)$.} 
$$
\Gm\to \Gm,\ z\mapsto z^{-i}
$$
and set 
$$
\mathrm{fil}^i(V):=\bigoplus\limits_{j\geq i} V_j 
$$
for $i\in \Z$.
Then the Harder-Narasimhan filtration of $\mathcal{E}:=\mathcal{E}(V)$ is given by
$$
\ldots \subseteq \mathrm{HN}^{i+1}(\mathcal{E})\subseteq \mathrm{HN}^{i}(\mathcal{E})\subseteq\ldots \subseteq \mathcal{E}.
$$
where
$$
\mathrm{HN}^{i}(\mathcal{E}):=\mathcal{E}(\mathrm{fil}^{i}(V)).
$$

\begin{lemma}
\label{lemma: harder-narasimhan filtration tensor functor} 
Sending a vector bundle $\mathcal{E}$ to the filtered vector bundle $\mathcal{E}$ with the Harder-Narasimhan filtration $\mathrm{HN}^\bullet(\mathcal{E})$ defines a fully faithful tensor functor
$$
\mathrm{HN}\colon \Bun_{\P^1_k}\to \mathrm{FilBun}_{\P^1_k}
$$ 
into the exact tensor category of filtered vector bundles (with filtration by locally direct summands) (cf.\ \cite[Chapter 4]{ziegler_graded_and_filtered_fiber_functors} for a definition of $\mathrm{FilBun}_{\P^1_k}$).
\end{lemma}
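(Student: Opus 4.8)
The plan is to check the three assertions packaged into the statement — that $\mathrm{HN}$ is a well-defined functor, that it is fully faithful, and that it respects tensor products — in turn, using throughout the explicit formula $\mathrm{HN}^i(\mathcal{E}(V))=\mathcal{E}(\mathrm{fil}^i(V))$ recorded above together with \Cref{theorem: classification of vector bundles}, which presents every vector bundle and each of its Harder-Narasimhan graded pieces as a direct sum of line bundles $\mathcal{O}_{\P^1_k}(j)$.

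The crux is that any morphism $f\colon\mathcal{E}\to\mathcal{F}$ of vector bundles automatically respects the filtrations, i.e.\ $f(\mathrm{HN}^i(\mathcal{E}))\subseteq\mathrm{HN}^i(\mathcal{F})$ for every $i$; granting this, $\mathrm{HN}$ is a functor. To see it I would consider the composite $\mathrm{HN}^i(\mathcal{E})\hookrightarrow\mathcal{E}\xrightarrow{f}\mathcal{F}\twoheadrightarrow\mathcal{F}/\mathrm{HN}^i(\mathcal{F})$ and show it vanishes. By the explicit formula, $\mathrm{HN}^i(\mathcal{E})$ is a direct sum of line bundles $\mathcal{O}_{\P^1_k}(a)$ with $a\geq i$, while $\mathcal{F}/\mathrm{HN}^i(\mathcal{F})$ is a direct sum of line bundles $\mathcal{O}_{\P^1_k}(b)$ with $b<i$. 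Since $\Hom(\mathcal{O}_{\P^1_k}(a),\mathcal{O}_{\P^1_k}(b))=H^0(\P^1_k,\mathcal{O}_{\P^1_k}(b-a))=0$ whenever $a>b$, the composite is zero, as desired. This is the Harder-Narasimhan slope inequality made concrete on $\P^1_k$, and it is the one place where the special geometry of the projective line genuinely enters.

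Full faithfulness is then formal. Write $u\colon\mathrm{FilBun}_{\P^1_k}\to\Bun_{\P^1_k}$ for the forgetful functor, which is faithful and satisfies $u\circ\mathrm{HN}=\mathrm{id}$; hence $\mathrm{HN}$ is faithful. For fullness, let $\phi\colon\mathrm{HN}(\mathcal{E})\to\mathrm{HN}(\mathcal{F})$ be a morphism in $\mathrm{FilBun}_{\P^1_k}$; by definition its underlying bundle map $u(\phi)\colon\mathcal{E}\to\mathcal{F}$ preserves the filtrations. Since by the previous paragraph every bundle map preserves the Harder-Narasimhan filtrations, $\mathrm{HN}(u(\phi))$ is a well-defined morphism of filtered bundles whose underlying map is $u(\phi)$, so $\mathrm{HN}(u(\phi))=\phi$ by faithfulness of $u$. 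Thus $\mathrm{HN}$ induces a bijection on $\Hom$-sets.

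Finally, for the tensor structure I would write $\mathcal{E}=\mathcal{E}(V)$ and $\mathcal{F}=\mathcal{E}(W)$ and use that $\mathcal{E}(-)$ is a tensor functor, so that $\mathcal{E}\otimes\mathcal{F}=\mathcal{E}(V\otimes W)$. The claim then reduces to the purely combinatorial fact that the filtration attached to a $\Z$-grading is multiplicative: for the grading $(V\otimes W)_n=\bigoplus_{i+j=n}V_i\otimes W_j$ one checks directly that $\mathrm{fil}^n(V\otimes W)=\bigoplus_{a+b\geq n}V_a\otimes W_b=\sum_{i+j=n}\mathrm{fil}^i(V)\otimes\mathrm{fil}^j(W)$, which is exactly the convolution filtration defining the tensor product in $\mathrm{FilBun}_{\P^1_k}$. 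Applying the exact tensor functor $\mathcal{E}(-)$ transports this identity to the filtrations $\mathrm{HN}^\bullet$, and compatibility with the unit object (the trivial bundle placed in degree $0$) is immediate. I expect the only substantive input to be the $\Hom$-vanishing of the second paragraph; the full faithfulness is then a formal consequence and the tensor-compatibility is combinatorial.
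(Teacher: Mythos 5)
Your proof is correct and follows exactly the route the paper intends: the paper's own proof is the one-line remark that the lemma ``is clear from the description of the Harder-Narasimhan filtration,'' and your argument simply makes that precise, using the formula $\mathrm{HN}^i(\mathcal{E}(V))=\mathcal{E}(\mathrm{fil}^i(V))$ and the vanishing $\Hom(\mathcal{O}_{\P^1_k}(a),\mathcal{O}_{\P^1_k}(b))=0$ for $a>b$ to get functoriality and fullness, and the convolution identity on $\mathrm{fil}^\bullet$ for the tensor structure. All three steps check out, so your write-up is a faithful (and welcome) expansion of the details the paper leaves implicit.
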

\begin{proof}
This is clear from the description of the Harder-Narasimhan filtration.
\end{proof}

We remark that the functor $\mathrm{HN}$ is \textit{not} exact as one sees for example by looking at the Euler sequence
$$
0\to \mathcal{O}_{\P^1_k}(-1)\to \mathcal{O}_{\P^1_k}\oplus \mathcal{O}_{\P^1_k}\to \mathcal{O}_{\P^1_k}(1)\to 0
$$
on $\P^1_k$.

Sending a filtered vector bundle $(\mathcal{E}, F^\bullet)$ to the associated graded vector bundle
$$
\mathrm{gr}(\mathcal{E}):=\bigoplus_{i\in \Z} F^i\mathcal{E}/F^{i+1}\mathcal{E}
$$
defines an exact tensor functor
$$
\mathrm{gr}\colon \mathrm{FilBun}_{\P^1_k}\to \mathrm{GrBun}_{\P^1_k}
$$
(cf.\ \cite[Chapter 4]{ziegler_graded_and_filtered_fiber_functors}).

The following lemma is immediate from \Cref{theorem: classification of vector bundles}, \Cref{lemma: harder-narasimhan filtration tensor functor} and the fact that
$$
H^0(\P^1_k,\mathcal{O}_{\P^1_k})\cong k.
$$

\begin{lemma}
\label{lemma: representations of gm as graded vector bundles}
The composite functor
$$
\Rep_k(\Gm)\xrightarrow{\mathcal{E}(-)} \Bun_{\P^1_k}\xrightarrow{\mathrm{HN}} \mathrm{FilBun}_{\P^1_k}\xrightarrow{\mathrm{gr}} \mathrm{GrBun}_{\P^1_k}
$$  
is an equivalence of exact categories from $\Rep_k(\Gm)$ onto its essential image which consists of graded vector bundles 
$$
\mathcal{E}=\bigoplus\limits_{i\in \Z} \mathcal{E}^i
$$
such that each $\mathcal{E}^i$ is semistable of slope $i$.
\end{lemma}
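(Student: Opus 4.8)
The plan is to make the composite $\mathrm{gr}\circ\mathrm{HN}\circ\mathcal{E}(-)$ completely explicit on objects and then to verify full faithfulness by reducing to a single graded piece. Writing $V=\bigoplus_{i\in\Z}V_i$ with $\Gm$ acting on $V_i$ through $z\mapsto z^{-i}$, the explicit description of the Harder--Narasimhan filtration recalled above gives $\mathrm{HN}^i(\mathcal{E}(V))=\mathcal{E}(\mathrm{fil}^i(V))$, so the associated graded is
$$
\mathrm{gr}(\mathrm{HN}(\mathcal{E}(V)))=\bigoplus_{i\in\Z}\mathcal{E}(\mathrm{fil}^i V/\mathrm{fil}^{i+1}V)=\bigoplus_{i\in\Z}\mathcal{E}(V_i),
$$
with $\mathcal{E}(V_i)$ placed in degree $i$. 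Since $\mathcal{E}(-)$ is a tensor functor sending the standard character to $\mathcal{O}_{\P^1_k}(-1)$, the isotypic space $V_i$ (a sum of copies of $z\mapsto z^{-i}$) goes to $\mathcal{O}_{\P^1_k}(i)^{\oplus\dim V_i}$. Thus the composite sends $V$ to the graded bundle whose degree-$i$ piece is $\mathcal{O}_{\P^1_k}(i)^{\oplus\dim V_i}$.

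This already identifies the essential image: by \Cref{theorem: classification of vector bundles} a vector bundle on $\P^1_k$ is semistable of slope $i$ precisely when it is a direct sum of copies of $\mathcal{O}_{\P^1_k}(i)$, so the image consists exactly of the graded bundles $\bigoplus_i \mathcal{E}^i$ with $\mathcal{E}^i$ semistable of slope $i$, and essential surjectivity onto this subcategory follows by reading the previous computation backwards.

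For full faithfulness --- the real point, since $\mathcal{E}(-)$ itself is only faithful --- I would argue degree by degree. A $\Gm$-equivariant map preserves the grading, so $\Hom_{\Rep_k(\Gm)}(V,W)=\bigoplus_i\Hom_k(V_i,W_i)$, and a morphism of graded bundles likewise preserves degree, so the target group is $\bigoplus_i\Hom_{\Bun_{\P^1_k}}(\mathcal{E}(V_i),\mathcal{E}(W_i))$. On the $i$-th summand the comparison map is $\mathcal{E}(-)$ applied to the isotypic pieces of the character $z\mapsto z^{-i}$, i.e. a $k$-linear map $\mathrm{Mat}_{\dim W_i\times\dim V_i}(k)\to\mathrm{Mat}_{\dim W_i\times\dim V_i}(\mathrm{End}(\mathcal{O}_{\P^1_k}(i)))$. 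The crucial input $H^0(\P^1_k,\mathcal{O}_{\P^1_k})\cong k$ yields $\mathrm{End}(\mathcal{O}_{\P^1_k}(i))\cong k$, so both sides have the same finite $k$-dimension; as $\mathcal{E}(-)$ is faithful the map is injective, hence bijective. This is exactly where $\mathrm{gr}$ does its work: it discards the slope-increasing morphisms (such as $\mathcal{O}_{\P^1_k}(-1)\to\mathcal{O}_{\P^1_k}$ coming from the Euler sequence) that prevented $\mathcal{E}(-)$ from being full, leaving only the degree-preserving ones, which $H^0(\mathcal{O}_{\P^1_k})=k$ controls.

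Finally, to upgrade this to an equivalence of exact categories I would use that $\Rep_k(\Gm)$ is semisimple, so every short exact sequence splits; since $\mathcal{E}(-)$ and $\mathrm{gr}$ are exact and $\mathrm{HN}$ is additive, the composite carries split sequences to split, hence admissible, sequences, and conversely a degreewise test together with the equivalence $\mathrm{Vect}_k\simeq\{\mathcal{O}_{\P^1_k}(i)^{\oplus n}\}$ shows the inverse is exact as well. The only genuine subtlety I anticipate is the full-faithfulness step, together with the bookkeeping that the grading conventions align so that the degree-$i$ piece really is semistable of slope $i$ (the footnote sign); once $\mathrm{End}(\mathcal{O}_{\P^1_k}(i))=k$ is in hand, the rest is formal.
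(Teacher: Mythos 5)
Your proposal is correct and takes essentially the same route as the paper, which simply declares the lemma immediate from the classification theorem, the explicit description of the Harder--Narasimhan filtration of $\mathcal{E}(V)$, and the fact that $H^0(\P^1_k,\mathcal{O}_{\P^1_k})\cong k$. Your writeup is precisely the elaboration of that one-line argument: the degreewise computation of the composite, full faithfulness via $\mathrm{End}(\mathcal{O}_{\P^1_k}(i))\cong k$, and exactness in both directions using semisimplicity and the vanishing of the relevant $H^1$ groups.
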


\section{Torsors over $\P^1_k$}

Let $G/k$ be an arbitrary reductive group. In this section we want to classify $G$-torsors on $\P^1_k$ for the \'etale topology. For this we keep the notation from the last section.
In particular, there is the functor
$$
\mathcal{E}(-)\colon \Rep_k(\Gm)\to\Bun_{\P^1_k}
$$
from \Cref{theorem: classification of vector bundles}

In order to apply the formulations from the previous section we need a more bundle theoretic interpretation of $G$-torsors (for the \'etale topology). This is achieved by the Tannakian formalism (cf.\ \cite{deligne_categories_tannakiennes})

\begin{lemma}
\label{lemma: tannakian description of torsors}
Let $S$ be a scheme over $k$. Sending a $G$-torsor $\mathcal{P}$ over $S$ to the exact tensor functor
$$
\omega\colon \Rep_k(G)\to \Bun_{S},\ V\mapsto \mathcal{P}\times^{G}(V\otimes_k \mathcal{O}_S)
$$
defines an equivalence from the groupoid of $G$-torsors to the groupoid of exact tensor functors from $\Rep_k(G)$ to $\Bun_S$.
The inverse equivalence sends an exact tensor functor $\omega\colon \Rep_k(G)\to \Bun_{S}$ the $G$-torsor $\mathrm{Isom}^\otimes(\omega_{\mathrm{can}},\omega)$ of isomorphisms of $\omega$ to the canonical fiber functor $\omega_{\mathrm{can}}\colon \Rep_k(G)\to \Bun_S,\ V\mapsto V\otimes_k \mathcal{O}_S$.
\end{lemma}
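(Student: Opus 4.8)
The statement is an instance of Tannakian duality, so the plan is to reduce everything to the reconstruction theorem for $\Rep_k(G)$ over the base $k$ together with \'etale descent. First I would check that the forward assignment is well defined: for a $G$-torsor $\mathcal{P}$ the contracted product $\mathcal{P}\times^{G}(V\otimes_k\mathcal{O}_S)$ is a vector bundle on $S$, and the assignment $V\mapsto \mathcal{P}\times^{G}(V\otimes_k\mathcal{O}_S)$ is an exact tensor functor. Since $\mathcal{P}$ is \'etale-locally trivial and all three properties (being a vector bundle, exactness, and compatibility with tensor products and duals) are \'etale-local on $S$, this reduces to the case of the trivial torsor, where $\omega$ becomes the canonical fiber functor $\omega_{\mathrm{can}}$ and the claim is immediate. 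An isomorphism of $G$-torsors induces a tensor isomorphism of the associated functors, so one indeed obtains a functor between the two groupoids.

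Conversely, given an exact tensor functor $\omega$, I would form the fppf sheaf on $S$-schemes sending $T$ to the set of tensor isomorphisms $\omega_{\mathrm{can},T}\xrightarrow{\sim}\omega_T$, that is, the object $\mathrm{Isom}^\otimes(\omega_{\mathrm{can}},\omega)$. The key input, which I would import from the reconstruction theorem of Saavedra and Deligne (cf.\ \cite{deligne_categories_tannakiennes}), is that the forgetful functor $\Rep_k(G)\to \mathrm{Vec}_k$ has automorphism group scheme canonically isomorphic to $G$, and that this identification is compatible with base change, so that $\underline{\Aut}^\otimes(\omega_{\mathrm{can}})\cong G_S$ as sheaves of groups over $S$. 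Consequently $\mathrm{Isom}^\otimes(\omega_{\mathrm{can}},\omega)$ is a pseudo-torsor under $G_S$, and the Tannakian formalism further guarantees that it is representable by an affine $S$-scheme and fppf-locally non-empty, i.e.\ that $\omega$ is fppf-locally isomorphic to $\omega_{\mathrm{can}}$. Because $G$ is reductive, hence smooth, such an fppf $G$-torsor is automatically trivial \'etale-locally, so $\mathrm{Isom}^\otimes(\omega_{\mathrm{can}},\omega)$ defines a class in $H^1_\et(S,G)$.

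It then remains to see that the two constructions are mutually inverse, which I would again verify \'etale-locally. Starting from a torsor $\mathcal{P}$, the evaluation pairing identifies $\mathrm{Isom}^\otimes(\omega_{\mathrm{can}},\omega_{\mathcal{P}})$ with $\mathcal{P}$, canonically and compatibly with trivializations; starting from a functor $\omega$ with associated torsor $\mathcal{Q}=\mathrm{Isom}^\otimes(\omega_{\mathrm{can}},\omega)$, the tautological isomorphism $\mathcal{Q}\times^{G}(V\otimes_k\mathcal{O}_S)\cong\omega(V)$ recovers $\omega$. Both identifications are obvious over a trivializing \'etale cover and glue by descent, yielding that the two functors are quasi-inverse. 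The main obstacle is precisely the Tannakian input of the second paragraph, namely that $\underline{\Aut}^\otimes(\omega_{\mathrm{can}})\cong G_S$ and that every exact tensor functor into $\Bun_S$ is fppf-locally isomorphic to $\omega_{\mathrm{can}}$; I would not reprove this reconstruction statement but cite \cite{deligne_categories_tannakiennes}, since everything else is a formal descent argument reducing to the split case.
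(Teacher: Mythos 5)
Your proposal is correct and matches the paper's treatment: the paper gives no independent argument for this lemma, instead invoking the Tannakian formalism of Deligne \cite{deligne_categories_tannakiennes} for the equivalence (a priori with fpqc-locally trivial torsors) and then using smoothness of the reductive group $G$, via Grothendieck's theorem \cite[Th\'eor\`eme 11.7]{grothendieck_le_group_de_brauer_III}, to reduce to the \'etale topology --- exactly the two ingredients you identify and organize in the same way. Your additional verifications (exactness of the contracted-product functor checked \'etale-locally, the mutual-inverse identifications glued by descent) are routine and consistent with what the paper leaves implicit.
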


In fact, for a general affine group scheme over $k$ one has to use the fpqc-topology in \Cref{lemma: tannakian description of torsors}. However, as $G$ is assumed to be reductive, thus in particular smooth, a theorem of Grothendieck (cf.\ \cite[Th\'eor\`eme 11.7]{grothendieck_le_group_de_brauer_III}) allows to reduce to the \'etale topology.

Composing an exact tensor functor
$$
\omega\colon \Rep_k(G)\to \Bun_{\P^1_k} 
$$ 
with the Harder-Narasimhan functor
$$
\mathrm{HN}\colon \Bun_{\P^1_k}\to \mathrm{FilBun}_{\P^1_k} 
$$
defines a, a priori not necessarily exact, tensor functor
$$
\mathrm{HN}\circ \omega\colon \Rep_k(G)\to \mathrm{FilBun}_{\P^1_k}.
$$

But using Haboush's theorem reductivity of $G$ actually implies that the composition $\mathrm{HN}\circ \omega$ is still exact.

\begin{lemma}
\label{lemma: composition with harder-narasimhan filtration still exact}
Let 
$$
\omega\colon \Rep_k(G)\to \Bun_{\P^1_k} 
$$ 
be an exact tensor functor. Then the composition 
$$
\mathrm{HN}\circ \omega\colon \Rep_k(G)\to \mathrm{FilBun}_{\P^1_k}
$$
is still exact.
\end{lemma}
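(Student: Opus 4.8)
The plan is to deduce exactness from the behaviour on associated gradeds. Since a sequence in $\mathrm{FilBun}_{\P^1_k}$ is admissible exact precisely when its underlying sequence of bundles is exact and the filtration is strict, i.e.\ precisely when its image under the exact functor $\mathrm{gr}$ is exact, the functor $\mathrm{HN}\circ\omega$ is exact if and only if the tensor functor $F:=\mathrm{gr}\circ\mathrm{HN}\circ\omega$ is exact. By \Cref{lemma: representations of gm as graded vector bundles} the functor $F$ takes values in the essential image described there, so I may and will regard $F$ as an additive tensor functor $F\colon \Rep_k(G)\to\Rep_k(\Gm)$. Since $\omega$ comes from a $G$-torsor, $F(V)$ has total rank $\dim_k V$, so the relation $\dim_k F(A)-\dim_k F(B)+\dim_k F(C)=0$ holds automatically for every short exact sequence $0\to A\to B\to C\to 0$ in $\Rep_k(G)$. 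Hence it suffices to show that $F$ sends monomorphisms to monomorphisms; equivalently, since $F$ commutes with duals and $\Rep_k(\Gm)$ is rigid, that $F$ sends every epimorphism to an epimorphism, the exactness in the middle then being forced by the rank count.

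Next I would reduce to epimorphisms onto the unit object. Given an epimorphism $q\colon B\twoheadrightarrow C$, I tensor with $C^\vee$ and pull back the resulting epimorphism $C^\vee\otimes B\twoheadrightarrow C^\vee\otimes C$ along the coevaluation $\mathbf 1\xrightarrow{\mathrm{coev}}C^\vee\otimes C$ (which picks out $\mathrm{id}_C$), obtaining an epimorphism $p\colon P\twoheadrightarrow\mathbf 1$ together with a map $P\to C^\vee\otimes B$ whose composite with $\mathrm{id}\otimes q$ equals $\mathrm{coev}\circ p$. If $F(p)$ is again an epimorphism, then applying $F$ (using $F(\mathrm{coev})=\mathrm{coev}$) and chasing images shows $\mathrm{coev}(\mathbf 1)\subseteq F(C^\vee)\otimes\mathrm{im}(F(q))$, and nondegeneracy of the coevaluation forces $\mathrm{im}(F(q))=F(C)$. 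Thus it is enough to treat epimorphisms $\pi\colon V\twoheadrightarrow\mathbf 1$ onto the unit.

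This last case is the crux, and it is where reductivity enters. In characteristic $0$ the category $\Rep_k(G)$ is semisimple, so $\pi$ splits and there is nothing to prove; the content lies in positive characteristic, where semisimplicity fails and is replaced by Haboush's theorem, i.e.\ geometric reductivity of $G$. Geometric reductivity provides, for some $n\geq 1$, a $G$-equivariant splitting $s\colon\mathbf 1\to\mathrm{Sym}^n V$ of $\mathrm{Sym}^n(\pi)\colon\mathrm{Sym}^n V\to\mathrm{Sym}^n\mathbf 1=\mathbf 1$. Applying the tensor functor $F$ and using $F\circ\mathrm{Sym}^n=\mathrm{Sym}^n\circ F$ together with $F(\mathbf 1)=\mathbf 1$ shows that $\mathrm{Sym}^n(F(\pi))$ is a split epimorphism in $\Rep_k(\Gm)$. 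As $\Rep_k(\Gm)$ is the semisimple category of graded vector spaces, a direct computation on the degree $0$ part shows that $\mathrm{Sym}^n$ of a map onto $\mathbf 1$ can be an epimorphism only if that map already is; hence $F(\pi)$ is an epimorphism, finishing the argument. The main obstacle is precisely this passage from the splitting of $\pi$ (which is false in characteristic $p$) to the splitting of a symmetric power, and supplying exactly this replacement for semisimplicity is the role of Haboush's theorem.
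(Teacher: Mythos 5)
Your argument is correct, and it rests on the same two pillars as the paper's own proof---Haboush's theorem and the compatibility of $F=\mathrm{gr}\circ\mathrm{HN}\circ\omega$ with symmetric powers and duals, followed by the same total-rank count for exactness in the middle---but your reductions are organized in a mirror-dual way, and the difference is worth recording. The paper proves preservation of \emph{monomorphisms} first: it reduces a general injection $f\colon V\hookrightarrow V'$ to the case $\dim V=1$ by taking $\Lambda^{\dim V}$, twists by $V^\vee$ to make $V$ trivial, and applies Haboush in the invariant-polynomial form (an $h\in\mathrm{Sym}^r(V'^\vee)^G$ with $h_{|V}\neq 0$), converted through $\mathrm{Sym}^r(V'^\vee)\cong\mathrm{TS}_r(V')^\vee$ into a splitting of $\mathrm{TS}_r(V)\to\mathrm{TS}_r(V')$; surjectivity on epimorphisms is then obtained by dualizing, which a priori gives surjectivity only at the generic point and forces an extra torsion-freeness argument via \Cref{lemma: representations of gm as graded vector bundles}. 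You instead prove preservation of \emph{epimorphisms} first, reducing a general surjection to one onto $\mathbf 1$ by the coevaluation pullback trick and invoking geometric reductivity directly in its dual form (a splitting of $\mathrm{Sym}^n(\pi)$ for $\pi\colon V\twoheadrightarrow\mathbf 1$, which is indeed equivalent to the invariant-polynomial form the paper uses, by dualizing $\pi$ to a fixed vector in $V^\vee$), and you then get monomorphisms by duality inside $\Rep_k(\Gm)$, where no generic-point or torsion issue can arise since one is computing with graded vector spaces. Your route thus avoids both the exterior-power reduction and the torsion argument, at the price of the pullback construction; both are legitimate, and your degree-zero computation showing that $\mathrm{Sym}^n$ of a graded map onto $\mathbf 1$ is epi only if the map is nonzero in degree $0$ is correct.

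One step you should not leave implicit: the identity $F\circ\mathrm{Sym}^n\cong\mathrm{Sym}^n\circ F$. At this stage $F$ is only known to be a tensor functor, not an exact one (exactness is what is being proved), and in characteristic $p$ the symmetric power $\mathrm{Sym}^n V$ is a quotient of $V^{\otimes n}$ but not a direct summand, so this commutation is not formal---it is precisely the ``crucial observation'' that opens the paper's proof. It does hold, by the argument given there: $\omega$ is exact, so $\omega(\mathrm{Sym}^n V)\cong\mathrm{Sym}^n\omega(V)$, and $\mathrm{gr}\circ\mathrm{HN}$ commutes with $\mathrm{Sym}^n$ on all of $\Bun_{\P^1_k}$ because every bundle has the form $\mathcal{E}(W)$, the functor $\mathcal{E}(-)$ is an exact tensor functor, and $\mathrm{gr}\circ\mathrm{HN}\circ\mathcal{E}(-)$ is an exact tensor equivalence onto its essential image (\Cref{theorem: classification of vector bundles}, \Cref{lemma: representations of gm as graded vector bundles}). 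The same remark applies to your implicit reduction of exactness of $\mathrm{HN}\circ\omega$ to exactness of $F$ (underlying exactness comes from exactness of $\omega$, and gr-exactness then yields strictness), which the paper also uses silently. With the commutation supplied, your proof is complete.
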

\begin{proof}
The crucial observation is that the functors 
$$
\omega,\ \mathrm{gr}\circ \mathrm{HN}
$$
are compatible with duals, and exterior resp.\ symmetric products. This is clear for $\omega$ as $\omega$ is assumed to be exact and follows from \Cref{lemma: representations of gm as graded vector bundles} for the functor $\mathrm{HN}\circ \mathrm{gr}$. In fact, for a representation $V\in \Rep_k(\Gm)$ with associated vector bundle
$$
\mathcal{E}:=\mathcal{E}(V)
$$
we can conclude
$$
\Lambda^r(\mathcal{E})\cong \mathcal{E}(\Lambda^r(V)) \textrm{ resp.\ } \mathrm{Sym}^r(\mathcal{E})\cong \mathcal{E}(\mathrm{Sym}^r(V))
$$
by exactness of the functor $\mathcal{E}(-)$. But by \Cref{lemma: representations of gm as graded vector bundles} 
$$
\mathrm{gr}\circ \mathrm{HN}\circ \mathcal{E}(-)
$$
is an exact tensor equivalence of $\Rep_k(\Gm)$ with a subcategory of $\mathrm{GrBun}_{\P^1_k}$, which implies the stated compatibility with exterior and symmetric powers.
Using this the proof can proceed similarly to \cite[Theorem 5.3.1]{dat_orlik_rapoport_period_domains}.
We note that for a representation $V$ of $G$ there is a canonical isomorphism
$$
\mathrm{Sym}^r(V^\vee)\cong \mathrm{TS}_r(V)^\vee
$$
from the $r$-th symmetric power $\mathrm{Sym}^r(V^\vee)$ of the dual of $V$ to the dual of the module 
$$\mathrm{TS}_r(V)=(V^{\otimes r})^{S_r}\subseteq V^{\otimes r}
$$ 
of symmetric tensors.
In particular, $G$-invariant homogenous polynomials on $V$ define $G$-invariant linear forms on $\mathrm{TS}_r(V)^\vee$.

Let now $0\to V\xrightarrow{f} V^\prime\xrightarrow{g} V^{\prime\prime}\to 0$ be an exact sequence in $\Rep_k(G)$. We have to check that the sequence
$$
0\to \tilde{\omega}(V)\xrightarrow{\tilde{\omega}(f)} \tilde{\omega}(V^\prime)\xrightarrow{\tilde{\omega}(g)} \tilde{\omega}(V^{\prime\prime})\to 0
$$
with 
$$
\tilde{\omega}:=\mathrm{gr}\circ \mathrm{HN}\circ \omega
$$
is still exact.
We claim that $\tilde{\omega}(f)$ is injective. This can be checked after taking the exterior power $\Lambda^{\dim V}$ of $f$ because $\tilde{\omega}$ commutes with exterior powers. In particular, to prove injectivity we can reduce the claim for general $f$ to the case $\dim V=1$. Tensoring with the dual of $V$ reduces further to the case the $V$ is moreover trivial. By Haboush's theorem (cf.\ \cite{haboush_reductive_groups_are_geometrically_reductive}) there exists an $r>0$ and a $G$-invariant homogenous polynomial $f\in \mathrm{Sym}^r(V^\vee)$ such that $f_{|V}\neq 0$. Using the above isomorphism $\mathrm{Sym}^r(V^\vee)\cong \mathrm{TS}_r(V)^\vee$ this shows that there exists an $r> 0$ such that the morphism
$$
V\cong \mathrm{TS}_r(V)\xrightarrow{\mathrm{TS}_r(f)}\mathrm{TS}_r(V^\prime)
$$
splits. This implies that $\tilde{\omega}(\mathrm{TS}_r(f))$ splits and thus that $\tilde{\omega}(f)$ is in particular injective because $\tilde{\omega}$ commutes with the symmetric tensors $TS_r$ as it commutes with symmetric powers and duals.

Dualizing yields that $\tilde{\omega}(g)$ is surjective at the generic point of $\P^1_k$. However, the sequence
$$
0\to \tilde{\omega}(V)\xrightarrow{\tilde{\omega}(f)} \tilde{\omega}(V^\prime)\xrightarrow{\tilde{\omega}(g)} \tilde{\omega}(V^{\prime\prime})\to 0
$$
lies in the essential image of the functor $\Rep_k(\Gm)\to \mathrm{GrBun}_{\P^1_k}$ from \Cref{lemma: representations of gm as graded vector bundles}. In particular, we see that the cokernel of $\tilde{\omega}(g)$ cannot have torsion, i.e., that it is zero.
Finally, exactness in the middle of the sequence follows because
$$
\mathrm{rk}(\tilde{\omega}(V^\prime))=\mathrm{rk}(V^\prime)=\mathrm{rk}(V)+\mathrm{rk}(V^{\prime\prime})=\mathrm{rk}(\tilde{\omega}(V))+\mathrm{rk}(\tilde{\omega}(V^{\prime\prime})).
$$
This finishes the proof.
\end{proof}

We briefly recall some results about filtered fiber functors on $\Rep_kG$ (cf.\ \cite{ziegler_graded_and_filtered_fiber_functors} and \cite{cornut_filtrations_and_buildings}). By definition a filtered fiber functor for $\Rep_kG$ over a $k$-scheme $S$ is an exact tensor functor
$$
\omega\colon \Rep_k G\to \mathrm{FilBun}_S
$$
into the exact tensor category of filtered vector bundles (with filtration by locally direct summands) on $S$. Associated to each filtered fiber functor $\omega$ is an exact tensor functor 
$$
\mathrm{gr}\circ \omega\colon \Rep_k G\to \mathrm{GrBun}_S,
$$
i.e., a graded fiber functor, by mapping a filtered vector bundle to its associated graded.
A splitting $\gamma$ of a filtered fiber functor $\omega$ is a graded fiber functor
$$
\gamma\colon \Rep_kG \to \mathrm{GrBun}_S
$$
such that
$$
\omega=\mathrm{fil}\circ \gamma
$$
where the exact tensor functor
$$
\mathrm{fil}\colon \mathrm{GrBun}_S\to \mathrm{FilBun}_S
$$
sends a graded vector bundle 
$$
\mathcal{E}=\bigoplus\limits_{i\in \Z}\mathcal{E}^i
$$
to the filtered vector bundle $(\mathcal{E}, \mathrm{fil}^\bullet \mathcal{E})$ with filtration
$$
\mathrm{fil}^i\mathcal{E}=\bigoplus\limits_{j\geq i}\mathcal{E}^j.
$$
For a scheme $f\colon S^\prime\to S$ over $S$ let $\omega_{S^\prime}$ be the base change of the filtered fiber functor $\omega$ to $S^\prime$, i.e., $\omega_{S^\prime}$ is defined as the composition
$$
\Rep_k G\xrightarrow{\omega} \mathrm{FilBun}_{S}\xrightarrow{f^\ast} \mathrm{FilBun}_{S^\prime},
$$
which is again a filtered fiber functor.
For a filtered fiber functor $\omega$ the presheaf
$$
\mathrm{Spl}(\omega)(S^\prime):=\{ \textrm{ set of splittings of }  \omega_{S^\prime}\}
$$
on the category of $S$-schemes is represented by an fpqc-torsor for the affine and faithfully flat group scheme 
$$
U(\omega):=\Ker(\Aut^\otimes(\omega)\to \Aut^\otimes(\mathrm{gr}\circ \omega))
$$
over $S$ (cf.\ \cite[Lemma 4.20]{ziegler_graded_and_filtered_fiber_functors}).
In particular, every filtered fiber functor 
$$
\omega\colon \Rep_k G\to \mathrm{FilBun}_S
$$ 
admits a splitting fpqc-locally on $S$.
The group scheme $U(\omega)$ can be described more explicitely (cf.\ \cite[Theorem 4.40]{ziegler_graded_and_filtered_fiber_functors}). Namely there exists a decreasing filtration by normal subgroups
$$
U(\omega)=U_1(\omega)\supseteq\ldots \supseteq U_i(\omega)\supseteq \ldots
$$
for $i\geq 1$, which has the property that for $i\geq 1$ the quotient
$$
\mathrm{gr}^iU(\omega):= U_i(\omega)/U_{i+1}(\omega)
$$
is abelian and isomorphic to
$$
\mathrm{gr}^iU(\omega)\cong \mathrm{Lie}(\mathrm{gr}^iU(\omega))\cong \mathrm{gr}^i\omega(\mathrm{Lie}(G)),\ i\geq 1.
$$
We can now give a proof of our main theorem about the classification of $G$-torsors on $\mathbb{P}^1_k$. We denote for a scheme $S$ over $k$ by
$$
\underline{\Hom}^\otimes(\Rep_k(G), \Bun_S)
$$
the groupoid of exact tensor functors $\omega\colon \Rep_k(G)\to \Bun_S$ and by
$$
\Hom^\otimes(\Rep_k(G), \Bun_S)
$$
its set of isomorphism classes. Similarly, we use the notations
$$
\underline{\Hom}^\otimes(\Rep_k(G), \Rep_k(\Gm))
$$
resp.\
$$
\Hom^\otimes(\Rep_k(G), \Rep_k(\Gm))
$$
for the groupoid resp.\ the isomorphism classes of exact tensor functors
$$
\omega\colon \Rep_k(G)\to \Rep_k(\Gm).
$$

\begin{theorem}
\label{theorem: main theorem}
Let $G$ be a reductive group over $k$.
Then the composition with $\mathcal{E}(-)$ defines faithful functor 
$$
\Phi\colon \underline{\Hom}^\otimes(\Rep_k(G),\Rep_k(\Gm))\to \underline{\Hom}^\otimes(\Rep_k(G),\Bun_{\P^1_k})
$$
which induces a bijection
$$
\Hom^\otimes(\Rep_k(G),\Rep_k(\Gm))\cong H^1_\et(\P^1_k,G).
$$
on isomorphism classes.
\end{theorem}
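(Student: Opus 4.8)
The plan is to read off both sides through the Tannakian dictionary and then to move an arbitrary $G$-torsor into $\Rep_k(\Gm)$ by splitting its Harder-Narasimhan filtration \emph{globally} on $\P^1_k$. By \Cref{lemma: tannakian description of torsors} the set of isomorphism classes of the target groupoid $\underline{\Hom}^\otimes(\Rep_k(G),\Bun_{\P^1_k})$ is canonically $H^1_\et(\P^1_k,G)$, so it suffices to show that $\Phi$ is faithful and induces a bijection on isomorphism classes. Faithfulness of $\Phi$ is immediate: a morphism in either groupoid is a tensor natural isomorphism, evaluated objectwise, and since $\mathcal{E}(-)$ is faithful by \Cref{theorem: classification of vector bundles}, two morphisms of tensor functors that agree after post-composition with $\mathcal{E}(-)$ already agree on each object.

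For injectivity on isomorphism classes I would post-compose a given isomorphism $\mathcal{E}(-)\circ\gamma_1\cong\mathcal{E}(-)\circ\gamma_2$ with the tensor functor $\mathrm{gr}\circ\mathrm{HN}$. By \Cref{lemma: representations of gm as graded vector bundles} the composite $\mathrm{gr}\circ\mathrm{HN}\circ\mathcal{E}(-)$ is a tensor equivalence of $\Rep_k(\Gm)$ onto its essential image, in particular fully faithful; hence the resulting tensor isomorphism between $\mathrm{gr}\circ\mathrm{HN}\circ\mathcal{E}(-)\circ\gamma_1$ and $\mathrm{gr}\circ\mathrm{HN}\circ\mathcal{E}(-)\circ\gamma_2$ descends uniquely to a tensor isomorphism $\gamma_1\cong\gamma_2$.

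The heart of the argument is surjectivity. Starting from an exact tensor functor $\omega\colon\Rep_k(G)\to\Bun_{\P^1_k}$, I would form the filtered fiber functor $\mathrm{HN}\circ\omega$, which is exact by \Cref{lemma: composition with harder-narasimhan filtration still exact}. The splittings of $\mathrm{HN}\circ\omega$ form an fpqc-torsor under the group scheme $U(\mathrm{HN}\circ\omega)$, whose central filtration has abelian graded pieces $\mathrm{gr}^iU(\mathrm{HN}\circ\omega)\cong\mathrm{gr}^i(\mathrm{HN}(\omega(\mathrm{Lie}(G))))$ for $i\geq 1$. Each such piece is the degree $i$ part of a Harder-Narasimhan filtration, hence a semistable vector bundle of slope $i\geq 1>0$, so $H^1_\et(\P^1_k,-)$ of it vanishes by the crucial vanishing result. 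Since $\mathrm{Lie}(G)$ is finite dimensional this filtration of $U(\mathrm{HN}\circ\omega)$ terminates, and a d\'evissage along it, trivializing the successive $\mathrm{gr}^i$-torsors, shows that the torsor of splittings admits a global section. Such a global splitting is a graded fiber functor $\gamma'$ with $\mathrm{HN}\circ\omega=\mathrm{fil}\circ\gamma'$ whose graded pieces are semistable of slope equal to their degree; by \Cref{lemma: representations of gm as graded vector bundles} it corresponds to an exact tensor functor $\gamma\colon\Rep_k(G)\to\Rep_k(\Gm)$. Forgetting the filtration in $\mathrm{HN}\circ\omega=\mathrm{fil}\circ\gamma'$ then produces a tensor isomorphism $\omega\cong\mathcal{E}(-)\circ\gamma=\Phi(\gamma)$, which gives surjectivity and completes the bijection.

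The step I expect to be the main obstacle is precisely the \emph{global} triviality of the torsor of splittings: the whole argument hinges on promoting the fpqc-local splitting of $\mathrm{HN}\circ\omega$ to a global one over $\P^1_k$. This is exactly where reductivity of $G$ enters, forcing the graded pieces of $U(\mathrm{HN}\circ\omega)$ to be semistable of strictly positive slope through the positivity of the top Harder-Narasimhan slopes of the adjoint bundle, and where it combines with the vanishing $H^1_\et(\P^1_k,\mathcal{E})=0$ for semistable $\mathcal{E}$ of slope $\geq 0$. One should also verify the normalization that degree $i$ corresponds to slope $i$, so that the relevant pieces of $U(\mathrm{HN}\circ\omega)$ genuinely sit in strictly positive slope and the dévissage applies.
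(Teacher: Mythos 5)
Your proposal is correct and follows essentially the same route as the paper: faithfulness and injectivity on isomorphism classes via the equivalence $\mathrm{gr}\circ \mathrm{HN}\circ \mathcal{E}(-)$ of \Cref{lemma: representations of gm as graded vector bundles}, and surjectivity by splitting the filtered fiber functor $\mathrm{HN}\circ\omega$ (exact by \Cref{lemma: composition with harder-narasimhan filtration still exact}) using the triviality of the $U(\mathrm{HN}\circ\omega)$-torsor of splittings, which follows by d\'evissage from $H^1_\et(\P^1_k,\mathrm{gr}^iU)=0$ for the semistable slope-$i\geq 1$ graded pieces $\mathcal{O}_{\P^1_k}(i)^{\oplus n}$, exactly as in the paper. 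One small correction to your closing commentary: reductivity of $G$ enters through Haboush's theorem in proving exactness of $\mathrm{HN}\circ\omega$, whereas the strict positivity of the slopes of $\mathrm{gr}^iU(\mathrm{HN}\circ\omega)$ is automatic from Ziegler's filtration beginning at $U_1$ (the index $i\geq 1$ itself), not from positivity properties of the adjoint bundle.
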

\begin{proof}
By \Cref{lemma: representations of gm as graded vector bundles} the composition
$$
\Rep_k(\Gm)\xrightarrow{\mathcal{E}(-)} \Bun_{\P^1_k}\xrightarrow{\mathrm{HN}} \mathrm{FilBun}_{\P^1_k}\xrightarrow{\mathrm{gr}} \mathrm{GrBun}_{\P^1_k}
$$  
is an equivalence onto its essential image. In particular, the functor
$$
\Phi\colon \underline{\Hom}^\otimes(\Rep_k(G),\Rep_k(\Gm))\to \underline{\Hom}^\otimes(\Rep_k(G),\Bun_{\P^1_k})
$$
is faithful and induces an injection on isomorphism classes.
Thus we have to prove that every exact tensor functor
$$
\omega\colon \Rep_k(G)\to \Bun_{\P^1_k}
$$
factors as
$$
\omega\cong \mathcal{E}(-)\circ \omega^\prime
$$
for some exact tensor functor
$$
\omega^\prime\colon \Rep_k(G)\to \Rep_k(\Gm).
$$
Let $\tilde{\omega}:=\mathrm{HN}\circ \omega$ be the functor
$$
\tilde{\omega}\colon \Rep_k(G)\xrightarrow{\omega}\Bun_{\P^1_k}\xrightarrow{\mathrm{HN}} \mathrm{FilBun}_{\P^1_k}.
$$
By \Cref{theorem: main theorem} the functor $\tilde{\omega}$ is still exact, i.e., a filtered fiber functor in the terminology of \cite{ziegler_graded_and_filtered_fiber_functors}, and we can use the results recalled above. 
We get a $U(\tilde{\omega})$-torsor 
$$
\mathrm{Spl}(\tilde{\omega})
$$
of splittings of $\tilde{\omega}$. 
But for the filtration
$$
U(\tilde{\omega})\supseteq U_{2}(\tilde{\omega})\supseteq \ldots
$$
the graded quotients
$$
\mathrm{gr}^iU(\tilde{\omega})\cong \mathrm{gr}^i\tilde{\omega}(\mathrm{Lie}(G))
$$
are semistable vector bundles of slope $i\geq 1$.
Hence,
$$
H^1_{\et}(\P^1_k,\mathrm{gr}^iU(\tilde{\omega}))=0
$$
because 
$$
\mathrm{gr}^iU(\tilde{\omega})\cong  \mathcal{O}_{\P^1_k}(i)^{\oplus n}
$$
by \Cref{theorem: classification of vector bundles}.
We can conclude that
$$
H^1_\et(\P^1_k, U(\tilde{\omega}))=1,
$$
hence the $U(\tilde{\omega})$-torsor
$$
\mathrm{Spl}(\tilde{\omega})
$$
is in fact trivial, i.e., there exists a splitting
$$
\gamma\colon \Rep_k G\to \mathrm{GrBun}_{\P^1_k}
$$
of $\tilde{\omega}$ already over $\P^1_k$. As 
$$
\gamma\cong \mathrm{gr}\circ \tilde{\omega}
$$ 
the functor $\gamma$ takes its image in the full subcategory
$$
\{\ \mathcal{E}=\bigoplus\limits_{i\in \Z} \mathcal{E}^i\in \mathrm{GrBun}_{\P^1}\ |\ \mathcal{E}^i \textrm{ semistable of slope }i\},
$$
which by \Cref{lemma: representations of gm as graded vector bundles} is equivalent to the category $\Rep_k \Gm$ of representations of $\Gm$. Thus there exists an exact tensor functor
$$
\omega^\prime\colon \Rep_k G\to \Rep_k \Gm
$$
such that 
$$
\omega\cong \mathcal{E}(-)\circ \omega^\prime,
$$
by simply setting
$$
\omega^\prime:=\mathcal{E}_{\mathrm{gr}}(-)^{-1}\circ \mathrm{gr}\circ \tilde{\omega}
$$
where 
$$
\mathcal{E}_{\mathrm{gr}}(-)\colon \Rep_k \Gm\to \{\ \mathcal{E}=\bigoplus\limits_{i\in \Z} \mathcal{E}^i\in \mathrm{GrBun}_{\P^1}\ |\ \mathcal{E}^i \textrm{ semistable of slope }i\},
$$
is the the equivalence of \Cref{lemma: representations of gm as graded vector bundles}.
\end{proof}

Let 
$$
\omega_{\mathrm{can}}^\Gm\colon \Rep_k(\Gm)\to \mathrm{Vec}_k,\ V\mapsto V
$$
be the canonical fiber functor of $\Rep_k(\Gm)$ over $k$.
Composing with $\omega_{\mathrm{can}}^\Gm$ defines a morphism 
$$
\Phi\colon \underline{\Hom}^\otimes(\Rep_k(G),\Rep_k(\Gm))\to \underline{\Hom}^\otimes(\Rep_k(G),\mathrm{Vec}_k)
$$
of groupoids, where the right hand side denotes the groupoid of exact tensor functors
$$
\Rep_k(G)\to \mathrm{Vec}_k,
$$
which by \Cref{lemma: tannakian description of torsors} identifies with the groupoid of $G$-torsors on $\Spec(k)$.
Geometrically, the morphism $\Phi$ can be identified on isomorphisms classes with the map
$$
i^\ast\colon H^1_{\et}(\P^1_k, G)\to H^1_\et(\Spec(k),G) 
$$ 
restricting a $G$-torsor over $\P^1_k$ to a $G$-torsor over $\Spec(k)$ along a $k$-rational point $x\in \P^1_k(k)$.

In the following lemma we analyze the fibers of this functor $\Phi$.

\begin{lemma}
\label{lemma: fibers are zariski torsors}
Let $\omega\colon \Rep_k(G)\to \mathrm{Vec}_k$ be an exact tensor functor and let 
$$
H:=\Aut^\otimes(\omega)
$$ 
be the pure inner form of $G$ defined by $\omega$.
Then the fiber 
$$
\Phi^{-1}(\omega)\subseteq \underline{\Hom}^\otimes(\Rep_kG,\Rep_k \Gm)
$$
is equivalent to the quotient groupoid
$$
[\Hom(\Gm,H)/{H(k)}]
$$
of cocharacters of $H$. Moreover, passing to isomorphism classes yields a bijection
$$
\Hom(\Gm,H)/{H(k)}\cong H^1_{\mathrm{Zar}}(\P^1_k,H).
$$
\end{lemma}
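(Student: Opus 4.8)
The plan is to reduce the whole statement to the case $G=H$ with $\omega$ the canonical fibre functor, and then to combine \Cref{theorem: main theorem} for $H$ with the description of gradings of a fibre functor by cocharacters. First I would invoke Tannaka duality: the fibre functor $\omega$ identifies $\Rep_k(G)$ with $\Rep_k(H)$ carrying $\omega$ to the canonical fibre functor $\omega^H_{\mathrm{can}}\colon\Rep_k(H)\to\mathrm{Vec}_k$, where $H=\Aut^\otimes(\omega)$ is \emph{reductive} (being a pure inner form of the reductive $G$). This equivalence is compatible with $\Phi$, so it suffices to analyse the fibre over $\omega^H_{\mathrm{can}}$ of $\Phi_H\colon\underline{\Hom}^\otimes(\Rep_k(H),\Rep_k(\Gm))\to\underline{\Hom}^\otimes(\Rep_k(H),\mathrm{Vec}_k)$.

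For the first assertion I would build an explicit equivalence $[\Hom(\Gm,H)/H(k)]\xrightarrow{\sim}\Phi_H^{-1}(\omega^H_{\mathrm{can}})$. A cocharacter $\mu\colon\Gm\to H$ yields the restriction functor $\eta_\mu\colon\Rep_k(H)\xrightarrow{\mu^\ast}\Rep_k(\Gm)$, which is exact and tensor and satisfies $\omega^\Gm_{\mathrm{can}}\circ\eta_\mu=\omega^H_{\mathrm{can}}$ on the nose, so $\eta_\mu$ lies in the fibre; an element $h\in H(k)$ with $h\mu h^{-1}=\mu'$ acts as a tensor isomorphism $\eta_\mu\cong\eta_{\mu'}$. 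This functor is essentially surjective because lifting $\omega^H_{\mathrm{can}}$ to $\Rep_k(\Gm)=\mathrm{GrBun}_k$ is exactly the datum of a $\Z$-grading of $\omega^H_{\mathrm{can}}$ compatible with tensor products, and such a grading is the same as a cocharacter $\Gm\to\Aut^\otimes(\omega^H_{\mathrm{can}})=H$. It is fully faithful because a tensor isomorphism $\eta_\mu\cong\eta_{\mu'}$ forgets to an automorphism $h\in H(k)$ of $\omega^H_{\mathrm{can}}$, and compatibility with the gradings forces $h\mu h^{-1}=\mu'$. The main point needing care here is the bookkeeping of morphisms: one must use the \emph{strict} fibre (objects $\eta$ with $\Phi_H(\eta)\cong\omega^H_{\mathrm{can}}$ together with all tensor isomorphisms between them), since the homotopy fibre would instead be the discrete set $\Hom(\Gm,H)$ and would not see the $H(k)$-action.

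For the second assertion I would take the induced bijection $\Hom(\Gm,H)/H(k)\cong\pi_0\,\Phi_H^{-1}(\omega^H_{\mathrm{can}})$ and compare it with the map $\alpha\colon\mu\mapsto[\mathcal{P}_\mu]$ sending $\mu$ to the Zariski class of the contracted product $\mathcal{P}_\mu:=(\mathbb{A}^2_k\setminus\{0\})\times^{\Gm,\mu}H$. Since the Hopf bundle is Zariski-locally trivial, so is $\mathcal{P}_\mu$, whence $\alpha$ lands in $H^1_{\mathrm{Zar}}(\P^1_k,H)$. By \Cref{lemma: tannakian description of torsors} the tensor functor attached to $\mathcal{P}_\mu$ is $V\mapsto\mathcal{E}(\mu^\ast V)=\mathcal{E}(-)\circ\eta_\mu$, so under the bijection of \Cref{theorem: main theorem} for $H$ the composite of $\alpha$ with the comparison map $\beta\colon H^1_{\mathrm{Zar}}(\P^1_k,H)\to H^1_\et(\P^1_k,H)$ is precisely the restriction of that bijection to the subset $\Hom(\Gm,H)/H(k)$. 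As this subset consists exactly of the functors whose underlying fibre functor is trivial, and $\Phi_H$ corresponds to restriction $i^\ast$ along $x\in\P^1_k(k)$, the map $\beta\circ\alpha$ is a bijection onto the set of classes that are trivial at $x$.

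It then remains to identify ``trivial at $x$'' with ``Zariski-locally trivial'' and to conclude. A Zariski-locally trivial torsor is trivial on a neighbourhood of $x$, hence trivial at $x$; conversely the previous step exhibits every class trivial at $x$ as some $[\mathcal{P}_\mu]$, hence as Zariski-locally trivial. Moreover $\beta$ is injective, because an isomorphism of the affine, representable $H$-torsors is a morphism of schemes and so is independent of the topology. Combining these facts — $\beta\circ\alpha$ a bijection onto the classes trivial at $x$, and $\beta$ injective with image exactly those classes — forces $\alpha$ to be a bijection $\Hom(\Gm,H)/H(k)\cong H^1_{\mathrm{Zar}}(\P^1_k,H)$. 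I expect the genuine obstacle to lie in the second paragraph (the grading/cocharacter dictionary and the correct groupoid of morphisms), with the Zariski-versus-\'etale comparison of the last paragraph as the secondary subtlety.
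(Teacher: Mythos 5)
Your proposal is correct and follows essentially the same route as the paper: reduce via Tannaka duality to $H$ with its canonical fibre functor, identify the fibre with cocharacters modulo conjugation, push out the Hopf bundle for one direction, and use \Cref{theorem: main theorem} together with triviality at a $k$-rational point for the converse. Your added care about the strict fibre versus the homotopy fibre, and your explicit use of the injectivity of $H^1_{\mathrm{Zar}}(\P^1_k,H)\to H^1_\et(\P^1_k,H)$, merely spell out steps the paper leaves implicit.
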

\begin{proof}
The first statement follows from the Tannakian formalism (cf.\ \cite{deligne_categories_tannakiennes}). Namely, $\omega$ defines an equivalence
$$
\Rep_k(G)\cong \Rep_k(H),\ V\mapsto \omega(V)
$$
and the groupoid of exact tensor functors
$$
\Rep_k H\to \Rep_k \Gm
$$
which commute (with a given isomorphism) with the canonical fiber functors on $\Rep_k H$ resp.\ $\Rep_k \Gm$ is equivalent to the quotient groupoid
$$
[\Hom(\Gm,H)/{H(k)}].
$$
with $H(k)$ acting by conjugation.
Clearly, for every cocharacter 
$$
\chi\colon \Gm\to H
$$
the push forward
$$
\eta\times^{\Gm}H
$$ 
is an $H$-torsor, which is locally trivial in the Zariski topology, because this is true for the Hopf bundle
$$
\eta\colon \mathbb{A}^2_k\setminus\{0\}\to \P^1_k.
$$ 
Let conversely $\mathcal{P}$ be an $H$-torsor over $\P^1_k$ which is trivial for the Zariski topology and let
$$
\omega_{\mathcal{P}}\colon \Rep_k H\to \Bun_{\P^1_k}, V\mapsto \mathcal{P}\times^{H}(V\otimes_k \mathcal{O}_{\P^1_k})
$$
be the induced fiber functor (cf.\ \Cref{lemma: tannakian description of torsors}).
Let $x\in \P^1_k(k)$ be a point a $k$-rational point and let $U\subseteq \P^1_k$ be open subset containing $x\in U$ such that 
$$
\mathcal{P}_{|U}
$$
is trivial. Then the exact tensor functor
$$
\Rep_k H\xrightarrow{\omega_{\mathcal{P}}}\Bun_{\P^1_k}\xrightarrow{\mathrm{res}}\Bun_U
$$
is isomorphic to the trivial fiber functor. This holds then also true after restricting to $x\in U$.
Let
$$
\varphi\colon \Rep_k H\to \Rep_k \Gm
$$
be an exact tensor functor such that 
$$
\mathcal{E}(-)\circ \varphi \cong \omega_{\mathcal{P}}.
$$
We can conclude that $\varphi$ preserves the canonical fiber functors on $\Rep_k H$ resp.\ $\Rep_k \Gm$ because the composition
$$
\Rep_k \Gm\xrightarrow{\mathcal{E}(-)} \Bun_{\P^1_k}\xrightarrow{\mathrm{res}}\Bun_{x}\cong \mathrm{Vec}_k
$$
is the canonical fiber functor.
In particular, there exists a cocharacter
$$
\chi\colon \Gm\to H
$$
such that $\mathcal{P}$ is obtained via pushout along $\chi$ of the Hopf bundle
$$
\eta\colon \mathbb{A}^2_k\setminus\{0\}\to \P^1_k.
$$ 
\end{proof}

Note that we have actually shown that a $G$-torsor $\mathcal{P}$ is already locally trivial for the Zariski topology if there exists some open $U\subseteq \P^1_k$ containing a $k$-rational point, such that $\mathcal{P}_{|U}$ is trivial.   
The classification results of Grothendieck and Harder on torsors on $\P^1_k$ (cf.\ \cite{grothendieck_sur_la_classification_des_fibres} resp.\ \cite{harder_halbeinfache_gruppenschemata_ueber_vollstaendigen_kurven}) are most concretely stated in the collowing form.

\begin{corollary}
\label{corollary: zariski torsors}
Let $k$ be a field and let $G/k$ be a reductive group with maximal split subtorus $A\subseteq G$. Then there exist canonical bijections
$$
X_\ast(A)_+\cong \mathrm{Hom}(\Gm,G)/G(k)\cong H^1_{\mathrm{Zar}}(\P^1_k,G),
$$ 
where $X_\ast(A)_+$ denotes the set of dominant cocharacters of $A\subseteq G$.
\end{corollary}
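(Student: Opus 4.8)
The plan is to deduce the corollary from \Cref{lemma: fibers are zariski torsors} together with standard structure theory of reductive groups. First I would obtain the rightmost bijection by applying \Cref{lemma: fibers are zariski torsors} to the canonical (forgetful) fiber functor $\omega_{\mathrm{can}}\colon \Rep_k(G)\to \mathrm{Vec}_k$, whose associated pure inner form is $H=\Aut^\otimes(\omega_{\mathrm{can}})=G$ itself. This immediately yields a canonical bijection $\Hom(\Gm,G)/G(k)\cong H^1_{\mathrm{Zar}}(\P^1_k,G)$, so it remains to establish the purely group-theoretic identification $X_\ast(A)_+\cong \Hom(\Gm,G)/G(k)$.

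For the latter, consider the map $\Psi\colon X_\ast(A)_+\to \Hom(\Gm,G)/G(k)$ sending a dominant cocharacter of $A$ to its $G(k)$-conjugacy class in $G$. I would prove surjectivity and injectivity separately. For surjectivity, let $\chi\colon \Gm\to G$ be any cocharacter; its image is a split subtorus of $G$ and hence lies in some maximal split torus $A^\prime\subseteq G$. By the conjugacy of maximal split tori of a reductive group over a field (Borel--Tits), $A^\prime$ is $G(k)$-conjugate to $A$, so after conjugation we may assume $\chi\in X_\ast(A)$. The relative Weyl group $W=N_G(A)(k)/Z_G(A)(k)$ acts on $X_\ast(A)$ with $X_\ast(A)_+$ as a fundamental domain, and since this action is realised by elements of $N_G(A)(k)\subseteq G(k)$, a further conjugation makes $\chi$ dominant. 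Thus $\Psi$ is surjective.

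For injectivity, suppose $\chi_1,\chi_2\in X_\ast(A)_+$ satisfy $g\chi_1 g^{-1}=\chi_2$ for some $g\in G(k)$; I claim they are already conjugate under $W$. The key step is to pass to the Levi subgroup $L:=Z_G(\chi_2)$, which is connected reductive because it is the centralizer of the subtorus $\chi_2(\Gm)\subseteq A$. Both $A$ and $gAg^{-1}$ are contained in $L$ --- the former because $A$ centralizes $\chi_2$, the latter because $gAg^{-1}\subseteq gZ_G(\chi_1)g^{-1}=Z_G(\chi_2)=L$ --- and both are maximal split tori of $L$, being maximal split in $G$. Applying the conjugacy of maximal split tori inside $L$ gives $h\in L(k)$ with $h(gAg^{-1})h^{-1}=A$, so that $n:=hg\in N_G(A)(k)$; since $h\in Z_G(\chi_2)(k)$ we get $n\chi_1 n^{-1}=h\chi_2 h^{-1}=\chi_2$, exhibiting $\chi_1$ and $\chi_2$ as $W$-conjugate. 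As dominant cocharacters form a fundamental domain for $W$, two $W$-conjugate dominant cocharacters coincide, whence $\chi_1=\chi_2$ and $\Psi$ is injective. The underlying identification $X_\ast(A)_+\cong X_\ast(A)/W$ of the dominant cone with the orbit space is the standard fundamental-domain statement for the relative root system $\Phi(G,A)$.

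I expect the main obstacle to be the reduction of $G(k)$-conjugacy to $W$-conjugacy, i.e.\ showing that a $G(k)$-conjugacy between cocharacters landing in $A$ can be arranged inside $N_G(A)(k)$. The trick of replacing $g$ by $hg$ using conjugacy of maximal split tori within the Levi $Z_G(\chi_2)$ is what makes this work, and it crucially requires knowing that the centralizer of a cocharacter in a connected reductive group is again connected reductive, so that Borel--Tits applies to $L$.
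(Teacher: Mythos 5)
Your proposal is correct and takes essentially the same route as the paper: both deduce $\Hom(\Gm,G)/G(k)\cong H^1_{\mathrm{Zar}}(\P^1_k,G)$ from \Cref{lemma: fibers are zariski torsors} applied to the canonical fiber functor (so $H=G$), and then identify $X_\ast(A)_+$ with $\Hom(\Gm,G)/G(k)$ using the $k$-conjugacy of maximal split tori together with the fact that dominant cocharacters represent the $N_G(A)(k)$-orbits on $X_\ast(A)$. The only difference is one of detail: where the paper merely cites these group-theoretic facts, you supply the standard proof of the key reduction from $G(k)$-conjugacy to relative-Weyl-group conjugacy via the Levi subgroup $Z_G(\chi_2)$, which is a correct filling-in rather than a different approach.
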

\begin{proof}
By \Cref{lemma: fibers are zariski torsors} it suffices to show
$$
X_\ast(A)_+\cong \mathrm{Hom}(\Gm,G)/G(k).
$$
But this follows from the fact that all maximal split tori in $G$ are conjugated over $k$  and that the set of dominant cocharacters form a system of representatives for the action of the normalizer $N_G(A)$ of $A$ in $G$ on the group $X_\ast(A)$ of cocharacters for $A$.
\end{proof}

A description of $H^1_\et(\P^1_k,G)$, similar to the one of us, can be found in \cite{gille_torseurs_sur_la_droite_affine}.

\section{Applications}

In this section we present some applications of the classification of torsors (following (cf.\ \cite{fargues_g_torseurs_en_theorie_de_hodge_p_adique}, which discusses analogous applications to the Fargues-Fontaine curve).

The first application is the computation of the Brauer group of $\P^1_k$.
For this we recall the theorem of Steinberg (cf.\ \cite[Chapter 3.2.3]{serre_galois_cohomology}). If $k$ is a field of cohomological dimension $\mathrm{cd}(k)\leq 1$, then Steinberg's theorem states that
$$
H^1_{\et}(\Spec(k),G)=1
$$ 
for every smooth connected affine algebraic group $G/k$.
In particular, the Brauer group
$$
\mathrm{Br}(k)=0
$$ of such fields vanishes.
For example, separably closed or finite fields are of cohomological dimension $\leq 1$.

\begin{theorem}
\label{theorem: brauer group of p1}
If $k$ is of cohomological dimension $\mathrm{cd}(k)\leq 1$, then the Brauer group
$$
\mathrm{Br}(\P^1_k)\cong H^2_{\et}(\P^1_k,\Gm)=0
$$
vanishes.  
\end{theorem}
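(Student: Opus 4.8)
The plan is to deduce the vanishing of $H^2_\et(\P^1_k,\Gm)$ from the surjectivity, for every $n\geq 1$, of the natural map
$$
H^1_\et(\P^1_k,\mathrm{GL}_n)\to H^1_\et(\P^1_k,\mathrm{PGL}_n),
$$
and to extract this surjectivity directly from the classification of torsors. As a preliminary reduction I would note that $\P^1_k$ is regular and integral, so $H^2_\et(\P^1_k,\Gm)$ embeds into $H^2_\et(\Spec k(t),\Gm)=\mathrm{Br}(k(t))$ and is in particular torsion; since $\P^1_k$ carries an ample line bundle, the theorem of Gabber and de~Jong identifies this torsion group with the Azumaya Brauer group $\mathrm{Br}(\P^1_k)$. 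Hence every class is represented by an Azumaya algebra of some degree $n$ and thus lies in the image of the boundary map
$$
\delta_n\colon H^1_\et(\P^1_k,\mathrm{PGL}_n)\to H^2_\et(\P^1_k,\Gm)
$$
attached to the central extension $1\to\Gm\to\mathrm{GL}_n\to\mathrm{PGL}_n\to 1$. By exactness of the associated cohomology sequence, the surjectivity above forces the image of each $\delta_n$ to be trivial, which then gives the theorem.

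To prove the surjectivity I would apply \Cref{corollary: zariski torsors} and \Cref{lemma: fibers are zariski torsors}. Since $\mathrm{cd}(k)\leq 1$, Steinberg's theorem yields $H^1_\et(\Spec k,\mathrm{PGL}_n)=1$, and likewise $H^1_\et(\Spec k,\mathrm{GL}_n)=1$ by Hilbert~90. Thus for both groups the functor $\Phi$ has a single nonempty fibre, lying over the trivial fibre functor whose pure inner form is the group itself, so every torsor is Zariski-locally trivial and
$$
H^1_\et(\P^1_k,\mathrm{GL}_n)\cong X_\ast(T)_+,\qquad H^1_\et(\P^1_k,\mathrm{PGL}_n)\cong X_\ast(\overline{T})_+,
$$
where $T\subseteq\mathrm{GL}_n$ is the diagonal maximal torus and $\overline{T}=T/\Gm\subseteq\mathrm{PGL}_n$ its image. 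Because the classification sends a dominant cocharacter to the pushout of the Hopf bundle along it, the transition map is the one induced on cocharacters by the quotient $\mathrm{GL}_n\to\mathrm{PGL}_n$.

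Finally I would check that $X_\ast(T)_+\to X_\ast(\overline{T})_+$ is surjective. On lattices the map $X_\ast(T)=\Z^n\to X_\ast(\overline{T})=\Z^n/\Z(1,\dots,1)$ is obviously surjective, and since $\mathrm{GL}_n$ and $\mathrm{PGL}_n$ share the root system of type $A_{n-1}$ and the Weyl group $S_n$, which fixes $(1,\dots,1)$, a cocharacter of $\overline{T}$ is dominant if and only if any of its lifts to $T$ is dominant; given a dominant class one lifts it arbitrarily and applies a suitable permutation to produce a dominant lift. Thus the map is surjective, each $\delta_n$ has trivial image, and $H^2_\et(\P^1_k,\Gm)=0$. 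I expect the main point requiring care to be the preliminary reduction, namely the torsion-ness of $H^2_\et(\P^1_k,\Gm)$ and its identification with the Azumaya Brauer group via Gabber and de~Jong; once this external input is in place, the vanishing is an immediate consequence of the classification developed above.
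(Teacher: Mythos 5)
Your proposal is correct and takes essentially the same route as the paper: use Steinberg's theorem together with \Cref{theorem: main theorem} and \Cref{lemma: fibers are zariski torsors} to identify $H^1_\et(\P^1_k,\mathrm{GL}_n)$ and $H^1_\et(\P^1_k,\mathrm{PGL}_n)$ with (dominant) cocharacters, then kill each boundary map $\delta_n$ by lifting cocharacters along $\mathrm{GL}_n\to\mathrm{PGL}_n$, which works since the sequence $0\to X_\ast(\Gm)\to X_\ast(T)\to X_\ast(T/\Gm)\to 0$ splits. The only divergence is your preliminary reduction: the paper simply cites Grothendieck (Le groupe de Brauer II, Corollaire 2.2), which already gives $\mathrm{Br}(\P^1_k)\cong H^2_\et(\P^1_k,\Gm)$ for a scheme of dimension one, so your appeal to torsion-ness of $H^2_\et(\P^1_k,\Gm)$ plus Gabber--de Jong, while valid, is heavier machinery than needed (and note that in your dominance discussion, since the kernel $\Z(1,\dots,1)$ pairs trivially with all roots, \emph{every} lift of a dominant class is already dominant, so no Weyl-group correction is required).
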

\begin{proof}
By \cite[Corollaire 2.2.]{grothendieck_le_group_de_brauer_II} there is an isomorphism
$$
\mathrm{Br}(\P^1_k)\cong H^2_{\et}(\P^1_k,\Gm)
$$
of the Brauer group $\mathrm{Br}(\P^1_k)$ parametrizing equivalence classes of Azumaya algebras over $\mathcal{O}_{\P^1_k}$ with the cohomological Brauer group $H^2_\et(\P^1_k,\Gm)$.
It suffices to show that for every $n\geq 0$ the canonical map
$$
H^1_\et(\P^1_k,\mathrm{PGL}_n)\to H^2_{\et}(\P^1_k,\Gm)
$$
arising as a boundary map of the short exact sequence
$$
1\to \Gm\to \mathrm{GL}_n\to \mathrm{PGL}_n\to 1
$$
is trivial. Because $k$ is of cohomological dimension $\leq 1$, there exists using Steinberg's theorem in the case $G=\mathrm{GL}_n$ or $G=\mathrm{PGL}_n$ and \Cref{theorem: main theorem} together with \Cref{lemma: fibers are zariski torsors} a commutative diagram
$$
\xymatrix{
H^1_{\et}(\P^1_k, \mathrm{GL}_n)\ar[r]\ar[d]^\cong & H^1_\et(\P^1_k,\mathrm{PGL}_n)\ar[d]^\cong \\
\Hom(\Gm,\mathrm{GL}_n)/{\mathrm{GL}_n(k)}\ar[r] & \Hom(\Gm,\mathrm{PGL}_n)/{\mathrm{PGL}_n(k)}.
}
$$
It suffices to show that the top horizontal arrow, or equivalently the lower horizontal arrow, is surjective. But every cocharacter 
$$
\chi\colon \Gm\to \mathrm{PGL}_n  
$$
can be lifted to $\mathrm{GL}_n$ because for the standard torus $T\cong \mathbb{G}_m^n\subseteq \mathrm{GL}_n$ there is a split exact sequence
$$
0\to X_\ast(\Gm)\to X_\ast(T)\to X_\ast(T/\Gm)\to 0
$$
on cocharacter groups where $T/\Gm$ is a maximal torus of $\mathrm{PGL}_n$.
\end{proof}

For a general field $k$, i.e., $k$ not necessarily of cohomological dimension $\leq 1$, the Brauer group of $\P^1_k$ is given by 
$$
\mathrm{Br}(\Spec(k))\cong \mathrm{Br}(\P^1_k)
$$
as can be calculated from \Cref{theorem: brauer group of p1} using the spectral sequence
$$
E^{pq}_2=H^p(\mathrm{Gal}(\bar{k}/k),H^q_{\et}(\P^1_{\bar{k}},\Gm))\Rightarrow H^{p+q}_{\et}(\P^1_k,\Gm)
$$
where $\bar{k}$ denotes a separable closure of $k$.

The next application we give is to the uniformization of $G$-torsors.

\begin{theorem}
\label{theorem: uniformization}
Let $k$ be a field and let $G$ be reductive group over $k$.
If $x\in \P^1_k(k)$ is $k$-rational point, then every $G$-torsor 
$$
\mathcal{P}\in H^1_{\mathrm{Zar}}(\P^1_k,G)
$$ 
which is locally trivial for the Zariski topology becomes trivial on $\P^1_k\setminus\{x\}$.
\end{theorem}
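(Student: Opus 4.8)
The plan is to reduce the entire statement to the behaviour of the single $\Gm$-torsor $\eta$ under restriction to the affine complement of $x$. First I would invoke the structural description established inside the proof of \Cref{lemma: fibers are zariski torsors}: applied to the canonical fiber functor, so that the relevant pure inner form $H$ is $G$ itself, it shows that every Zariski-locally-trivial $G$-torsor $\mathcal{P}$ on $\P^1_k$ is isomorphic to a pushout
$$
\mathcal{P}\cong \eta\times^{\Gm}G
$$
of the Hopf bundle $\eta\colon \mathbb{A}^2_k\setminus\{0\}\to \P^1_k$ along a suitable cocharacter $\chi\colon \Gm\to G$. This is the essential input, since it concentrates all of the geometry of $\mathcal{P}$ into the universal $\Gm$-torsor $\eta$ twisted by the homomorphism $\chi$.

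Next I would use that the formation of the contracted product commutes with restriction to an open subscheme, so that
$$
\mathcal{P}|_{\P^1_k\setminus\{x\}}\cong \bigl(\eta|_{\P^1_k\setminus\{x\}}\bigr)\times^{\Gm}G.
$$
Because the pushout of a \emph{trivial} $\Gm$-torsor along $\chi$ is a trivial $G$-torsor, it therefore suffices to prove that $\eta$ becomes trivial after restriction to $\P^1_k\setminus\{x\}$.

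This last point is where the hypothesis that $x$ is $k$-rational enters, and it is elementary. As $x\in\P^1_k(k)$, the complement $\P^1_k\setminus\{x\}$ is isomorphic to the affine line $\mathbb{A}^1_k$, whose Picard group vanishes; since $\Gm$-torsors on a scheme are classified by line bundles, the restriction $\eta|_{\P^1_k\setminus\{x\}}$ is trivial. Concretely, after choosing coordinates so that $x=[0:1]$ the complement is the standard affine chart $\{x_0\neq 0\}$, on which $[1:s]\mapsto (1,s)$ is an explicit global section of $\eta$, and under \Cref{theorem: classification of vector bundles} this reflects the triviality of $\mathcal{O}_{\P^1_k}(\pm 1)$ on $\mathbb{A}^1_k$. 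Combining this with the previous display shows that $\mathcal{P}|_{\P^1_k\setminus\{x\}}$ is the trivial $G$-torsor.

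I do not anticipate a serious obstacle: once \Cref{lemma: fibers are zariski torsors} supplies the presentation $\mathcal{P}\cong\eta\times^{\Gm}G$, the remaining content is merely the triviality of a degree $\pm 1$ line bundle on $\P^1_k$ after deleting its unique $k$-rational zero. The only points requiring a moment of care are the compatibility of pushout along $\chi$ with restriction to an open subset, and the fact that the pushout of a trivial $\Gm$-torsor is a trivial $G$-torsor, both of which are formal.
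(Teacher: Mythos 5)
Your proposal is correct and follows essentially the same route as the paper: the paper's proof likewise deduces from \Cref{corollary: zariski torsors} (which rests on \Cref{lemma: fibers are zariski torsors}) that $\mathcal{P}\cong \eta\times^{\Gm}G$ for some cocharacter, and then concludes from the triviality of $\mathcal{O}_{\P^1_k}(-1)$ on $\P^1_k\setminus\{x\}\cong \mathbb{A}^1_k$. Your extra remarks on the compatibility of contracted products with restriction and the explicit section over the affine chart only spell out steps the paper leaves implicit.
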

\begin{proof}
By \Cref{corollary: zariski torsors} we know that every such $G$-torsor $\mathcal{P}$ is isomorphic to the pushout
$$
\mathcal{P}\cong \eta\times^\Gm G
$$ along a cocharacter
$$
\chi\colon \Gm\to G
$$ 
of the canonical $\Gm$-torsor
$$
\eta\colon \mathbb{A}^2_k\setminus\{0\}\to \P^1_k
$$
corresponding to the line bundle $\mathcal{O}_{\P^1_k}(-1)$ on $\P^1_k$. But 
$$
\mathcal{O}_{\P^1_k}(-1)_{|\P^1_k\setminus\{x\}}
$$ 
is trivial because $\P^1_k\setminus\{x\}\cong \mathbb{A}^1_k$. This shows the claim.  
\end{proof}

Finally, we reprove the Birkhoff-Grothendieck decomposition of $G(k((t))$ for a reductive group $G$ over $k$ (cf.\ \cite[Lemma 4]{faltings_algebraic_loop_groups_and_moduli_spaces_of_bundles}).

\begin{theorem}
\label{theorem: decomposition}
Let $A\subseteq G$ be a maximal split torus in $G$. Then there exists a canonical bijection
$$
X_\ast(A)_{+}\cong {G(k[t^{-1}])}\backslash G(k((t)))/{G(k[[t]])},
$$  
where $X_\ast(A)_+$ denotes the set of dominant cocharacters of $A\subseteq G$.
\end{theorem}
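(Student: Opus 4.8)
The plan is to realize the double coset as the set of isomorphism classes of Zariski-locally trivial $G$-torsors on $\P^1_k$ and then to invoke \Cref{corollary: zariski torsors}. First I would fix coordinates, writing $0=[1:0]$ and $\infty=[0:1]$ for two $k$-rational points and letting $t$ be a coordinate vanishing at $0$, so that $\P^1_k\setminus\{0\}\cong \Spec(k[t^{-1}])$, the formal disc at $0$ is $\Spec(k[[t]])$, and the punctured formal disc is $\Spec(k((t)))$. By formal gluing along the Cartier divisor $\{0\}$ (the theorem of Beauville--Laszlo, in the $G$-equivariant form used in \cite{faltings_algebraic_loop_groups_and_moduli_spaces_of_bundles}) an element $g\in G(k((t)))$ glues the trivial $G$-torsor on $\P^1_k\setminus\{0\}$ to the trivial $G$-torsor on $\Spec(k[[t]])$ along $g$, producing a $G$-torsor $\mathcal{P}_g$ on $\P^1_k$. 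Replacing $g$ by $\gamma g\delta$ with $\gamma\in G(k[t^{-1}])$ and $\delta\in G(k[[t]])$ amounts to changing the two chosen trivializations, so $g\mapsto \mathcal{P}_g$ descends to a bijection between $G(k[t^{-1}])\backslash G(k((t)))/G(k[[t]])$ and the set of isomorphism classes of $G$-torsors on $\P^1_k$ that are trivial both on $\P^1_k\setminus\{0\}$ and on the formal disc at $0$. To make the $G$-equivariant gluing Tannakian I would apply Beauville--Laszlo to each associated vector bundle $\omega(V)$ and assemble the result using \Cref{lemma: tannakian description of torsors}.

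It then remains to identify this set of torsors with $H^1_{\mathrm{Zar}}(\P^1_k,G)$. On the one hand, a $G$-torsor that is trivial on $\P^1_k\setminus\{0\}$ is Zariski-locally trivial, since $\P^1_k\setminus\{0\}$ is an open containing the $k$-rational point $\infty$ (the remark following \Cref{lemma: fibers are zariski torsors}). On the other hand, if $\mathcal{P}$ is Zariski-locally trivial then \Cref{theorem: uniformization} applied at $x=0$ shows that $\mathcal{P}$ is trivial on $\P^1_k\setminus\{0\}$, while applied at $x=\infty$ it shows that $\mathcal{P}$ is trivial on $\P^1_k\setminus\{\infty\}\cong \mathbb{A}^1_k$, hence in particular on the formal disc at the point $0\in \P^1_k\setminus\{\infty\}$. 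Thus the two classes of torsors coincide, and the double coset is in canonical bijection with $H^1_{\mathrm{Zar}}(\P^1_k,G)$. Composing with the bijection $H^1_{\mathrm{Zar}}(\P^1_k,G)\cong X_\ast(A)_+$ of \Cref{corollary: zariski torsors} yields the claim.

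The hard part will be the $G$-equivariant Beauville--Laszlo step: turning the purely formal gluing datum $g\in G(k((t)))$ into an honest algebraic $G$-torsor on $\P^1_k$ and verifying both that the gluing is compatible with the tensor structure (so that it assembles to a torsor rather than merely a compatible system of vector bundles) and that the two-sided quotient records exactly the ambiguity in the two trivializations, so that isomorphism classes correspond to double cosets. Once this identification is in place, the remaining steps are formal consequences of \Cref{theorem: uniformization} and \Cref{corollary: zariski torsors}.
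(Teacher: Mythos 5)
Your proposal is correct and takes essentially the same route as the paper: the paper likewise uses Beauville--Laszlo together with \Cref{lemma: tannakian description of torsors} to glue the trivial torsor on $\P^1_k\setminus\{x\}$ to the trivial torsor on $\Spec(\widehat{\mathcal{O}}_{\P^1_k,x})\cong\Spec(k[[t]])$, obtaining an injection of the double coset set into $H^1_\et(\P^1_k,G)$, identifies its image with $H^1_{\mathrm{Zar}}(\P^1_k,G)$ via the remark following \Cref{lemma: fibers are zariski torsors} and \Cref{theorem: uniformization}, and concludes with \Cref{corollary: zariski torsors}. Your only deviation is the verification that a Zariski-locally trivial torsor is trivial on the formal disc, which you deduce from \Cref{theorem: uniformization} applied at $\infty$; this makes explicit a point the paper leaves implicit (and follows even more directly from triviality on a Zariski open neighborhood of $0$).
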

\begin{proof}
Let $x\in \P^1_k(k)$ be a $k$-rational point.
By Beauville-Laszlo \cite{beauville_laszlo_un_lemme_de_descente} and \Cref{lemma: tannakian description of torsors} there is an injective map
$$
\gamma\colon {G(k[t^{-1}])}\backslash G(k((t)))/{G(k[[t]])}\to H^1_{\et}(\P^1_k,G)
$$
by glueing the trivial $G$-torsor on $\P^1_k\setminus\{x\}$ with the trivial $G$-torsor on the formal completion
$$
\Spec(\widehat{\mathcal{O}}_{\P^1_k,x})
$$ 
along an isomorphism on $\Spec(\mathrm{Frac}(\widehat{\mathcal{O}}_{\P^1_k,x}))$. Note that $\widehat{\mathcal{O}}_{\P^1_k,x}\cong k[[t]]$.
From the remark following \Cref{lemma: fibers are zariski torsors} we can conclude that the $G$-torsors obtained in this way are actually locally trivial for the Zariski topology. By \Cref{theorem: uniformization} we can conversely see that the image of $\gamma$ contains the set $H^1_{\mathrm{Zar}}(\P^1_k,G)$. Using \Cref{corollary: zariski torsors} we can conclude that
$$
{G(k[t^{-1}])}\backslash G(k((t)))/{G(k[[t]])}\cong H^1_{\mathrm{Zar}}(\P^1_k,G)\cong X_{\ast}(A)_+.
$$ 
\end{proof}

\end{document}